\numberwithin{equation}{section} 
\newtheorem{theorem}{Theorem}[section]
\newtheorem{corollary}[theorem]{Corollary}
\newtheorem{proposition}[theorem]{Proposition}
 \theoremstyle{definition}
 \theoremstyle{remark}
\theoremstyle{question}
\newcommand{\comment}[1]{}
\newcommand{\R}{\mathbb R}
\newcommand{\N}{\mathbb N}
\newcommand{\EE}{\mathbb{E}}
\newcommand{\eps}{\varepsilon}
\newcommand{\ls}{\leqslant}
\newcommand{\gr}{\geqslant}
\providecommand{\Prob}[1]{\mathbb{P}\left(#1\right)}
\providecommand{\abs}[1]{\lvert#1\rvert}
\providecommand{\norm}[1]{\lVert#1\rVert}
\providecommand{\conv}[1]{\mathop{\rm conv}\left\{#1\right\}}
\providecommand{\vol}[1]{\left\lvert#1\right\rvert}
\providecommand{\dimm}[1]{\mathop{\rm dim}\left(#1\right)}
\providecommand{\det}[1]{\mathop{\rm det}(#1)}
\providecommand{\var}[1]{\mathop{\rm var}(#1)}
\begin{document}
\large

\title{A central limit theorem for projections of the cube}

\author{Grigoris Paouris \thanks{The first-named author is supported
    by the A. Sloan Foundation, BSF grant 2010288 and the US National
    Science Foundation, grants DMS-0906150 and CAREER-1151711.} \and
  Peter Pivovarov \thanks{The second-named author was supported by a
    Postdoctoral Fellowship award from the Natural Sciences and
    Engineering Research Council of Canada and the Department of
    Mathematics at Texas A\&M University.} \and J. Zinn \thanks{The
    third-named author was partially supported by NSF grant
    DMS-1208962.}}

\date{December 2, 2012}
\maketitle
\begin{abstract}
We prove a central limit theorem for the volume of projections of the
cube $[-1,1]^N$ onto a random subspace of dimension $n$, when $n$ is
fixed and $N\rightarrow \infty$. Randomness in this case is with
respect to the Haar measure on the Grassmannian manifold.
\end{abstract}

\section{Main result}

The focus of this paper is the volume of random projections of the
cube $B_{\infty}^N=[-1,1]^N$ in $\R^N$. To fix the notation, let $n\gr
1$ be an integer and for $N\gr n$, let $G_{N,n}$ denote the
Grassmannian manifold of all $n$-dimensional linear subspaces of
$\R^N$. Equip $G_{N,n}$ with the Haar probability measure $\nu_{N,n}$,
which is invariant under the action of the orthogonal group.  Suppose
that $(E(N))_{N\gr n}$ is a sequence of random subspaces
with $E(N)$ distributed according to $\nu_{N,n}$. We consider the
random variables
\begin{equation}
Z_N = \abs{P_{E(N)} B_{\infty}^N}, 
\end{equation} 
where $P_{E(N)}$ denotes the orthogonal projection onto $E(N)$ and
$\abs{\cdot}$ is $n$-dimensional volume, when $n$ is fixed and
$N\rightarrow \infty$.  We show that $Z_N$ satisfies the following
central limit theorem.

\begin{theorem} 
  \label{thm:CLTcube}
  \begin{equation}
    \label{eqn:CLTcube}
    \frac{Z_N-\EE Z_N}{\sqrt{\var{Z_N}}}\overset{d}{\rightarrow} 
    \mathcal{N}(0,1) \text{ as } N\rightarrow \infty.
  \end{equation}
\end{theorem}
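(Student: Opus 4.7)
The plan is to express $Z_N$ via Cauchy's formula for the volume of a zonotope, realize the Haar measure through a Gaussian matrix so as to uncover a U-statistic structure, and then linearize and apply the classical CLT for sums of i.i.d.\ random variables. Let $G$ be an $n\times N$ matrix with i.i.d.\ $\mathcal{N}(0,1)$ entries, and let $X_1,\dots,X_N\in\R^n$ denote its columns (i.i.d.\ $\mathcal{N}(0,I_n)$). Since the law of $G$ is $O(N)$-invariant on the right, the row span of $G$ is Haar-distributed on $G_{N,n}$, and $U := (GG^T)^{-1/2}G$ is an orthonormal basis of it. Applying Cauchy's formula to the zonotope $U(B_\infty^N) = \sum_i [-Ue_i, Ue_i]$,
\[
Z_N \;=\; 2^n \sum_{|I|=n}\abs{\det U_I} \;=\; 2^n\det(GG^T)^{-1/2}\,W_N,\qquad W_N := \sum_{|I|=n}\abs{\det(X_{i_1},\dots,X_{i_n})},
\]
so that $Z_N$ decouples into two symmetric functionals of the i.i.d.\ columns $X_1,\dots,X_N$.

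The object $W_N$ is a classical U-statistic of order $n$ with symmetric kernel $h(x_1,\dots,x_n)=\abs{\det(x_1,\dots,x_n)}$. Its first-order Hoeffding projection $h_1(x) := \EE h(x,X_2,\dots,X_n)$ equals $c_n\abs{x}$ with $c_n>0$: decomposing each $X_j$ into its component along $x$ and the projection $X_j^\perp$, the determinant equals $\abs{x}\cdot\abs{\det(X_2^\perp,\dots,X_n^\perp)}$ computed in $x^\perp$, and by rotational invariance the $X_j^\perp$ are i.i.d.\ standard Gaussian on $x^\perp$. Since $\var{|X_1|}>0$ the kernel is non-degenerate, and Hoeffding's theorem gives
\[
W_N - \EE W_N \;=\; \binom{N-1}{n-1}\sum_{i=1}^N \bigl(h_1(X_i)-\EE h_1(X_1)\bigr) \;+\; R_N,
\]
with $R_N$ a lower-order remainder coming from the degree-$\geq 2$ components of the Hoeffding decomposition.

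For the normalizing factor, the law of large numbers gives $GG^T/N = \frac1N\sum_i X_iX_i^T \to I_n$ a.s., and a Taylor expansion of $\det(I_n+A)^{-1/2}$ around $A=0$ yields
\[
\det(GG^T)^{-1/2} \;=\; N^{-n/2}\Bigl(1 - \frac{1}{2N}\sum_{i=1}^N (\abs{X_i}^2-n) + O_\PP(N^{-1})\Bigr).
\]
Multiplying the two expansions and centering, the leading-order fluctuation of $Z_N$ takes the form $C_N\sum_{i=1}^N \psi(X_i)$ plus smaller terms, where $\psi$ is a specific linear combination of $h_1(X_i)-\EE h_1(X_1)$ and $\abs{X_i}^2-n$ with $n$-dependent coefficients and $C_N\asymp N^{n/2-1}$; the classical CLT for sums of i.i.d.\ random variables then produces the Gaussian limit. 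The main technical obstacle is the strong dependence between $W_N$ and $\det(GG^T)^{-1/2}$---both built from the same $X_i$'s---so that Slutsky's lemma does not apply, and the joint linearization must be executed carefully. One must show that the higher-order terms (the degree-$\geq 2$ Hoeffding pieces of $W_N$ and the second-order Taylor corrections in the normalizer) are $o_\PP(\sqrt{\var{Z_N}})$, which requires uniform moment bounds for $\abs{\det(X_{i_1},\dots,X_{i_n})}$ and $\abs{X_i}^2$; finally, $\var{\psi(X_1)}>0$ must be verified, so that the contributions of the two factors do not conspire to cancel and the normalization in \eqref{eqn:CLTcube} is asymptotically non-trivial.
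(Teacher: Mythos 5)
Your decomposition $Z_N = 2^n\det(GG^T)^{-1/2}W_N$ is exactly the paper's starting point (with $X_N = 2^n W_N$ and $Y_N=\det(GG^T)^{1/2}$, so $Z_N=X_N/Y_N$), and your plan of linearizing both factors and landing on a classical CLT for a linear combination of $\lvert X_i\rvert$ and $\lvert X_i\rvert^2$ is the right idea --- it matches what the paper's Hoeffding projection of the combined kernel $2^n d_I - \beta_n d_I^2$ eventually produces. However, two points deserve attention.

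First, you flag ``strong dependence between $W_N$ and $\det(GG^T)^{-1/2}$'' as the main obstacle and propose to fight it with a Taylor expansion of $\det(I+A)^{-1/2}$. The paper sidesteps the Taylor-remainder control (which is delicate because $\det(GG^T)^{-1/2}$ has heavy tails near singularity, and one needs moment bounds, not just $O_\PP$ estimates) by an \emph{exact} algebraic identity: writing $Z_N-\EE Z_N = (X_N-\EE X_N)/Y_N - (Y_N^2-\EE Y_N^2)\,\EE X_N/(Y_N(Y_N+\EE Y_N)\EE Y_N) - (\text{small})$, and then absorbing $X_N$ and $Y_N^2$ into a \emph{single} U-statistic with kernel $2^n d_I - \beta_n d_I^2$, to which Hoeffding's CLT applies directly. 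Crucially, the algebraic step uses the fact --- which you do not exploit --- that $Y_N$ and $Z_N$ are independent (by rotation invariance), giving $\EE Z_N = \EE X_N/\EE Y_N$ exactly. Your centering by $2^n N^{-n/2}\EE W_N$ differs from $\EE Z_N$; you would need to verify separately that this discrepancy is $o(\sqrt{\var{Z_N}})$, an extra bookkeeping step the exact identity avoids.

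Second, and more substantively: the canonical normalization $\sqrt{\var{Z_N}}$ is the whole point of the theorem, and your proposal ends with ``one must show the higher-order terms are $o_\PP(\sqrt{\var{Z_N}})$'' and ``requires uniform moment bounds'' without indicating how to obtain them. This is precisely where the paper invests its main technical effort. Establishing $\var{Z_N}\sim c\,N^{n-1}$ (and that the remainder terms are negligible after dividing by it) hinges on uniform $L^p$ bounds for the centered U-statistics $X_N-\EE X_N$ and $Y_N^2-\EE Y_N^2$ of the correct order $N^{p(n-1/2)}$. The paper gets these from a nontrivial randomization inequality for degenerate U-statistics (de la Pe\~na--Gin\'e, Theorem~\ref{thm:randomization}, applied in Corollary~\ref{cor:randomization}), combined with uniform integrability to convert the distributional CLT into a CLT with the true variance. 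Without an argument of comparable strength, your outline proves at best a CLT with a nonstandard normalization (a conditional-variance scaling as in Vitale's theorem), not the statement as written. You also need to verify $\var{\psi(X_1)}>0$, i.e.\ that the two linear pieces do not cancel; in the paper this is where the explicit constant $\beta_n = 2^{n-1}\Delta_{n,1}/\Delta_{n,2}^2$ enters and must be checked against the ratio of the two conditional expectations $\EE[d_{I_0}\mid g_1]$ and $\EE[d_{I_0}^2\mid g_1]$.
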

Here $\overset{d}{\rightarrow}$ denotes convergence in distribution
and $\mathcal{N}(0,1)$ a standard Gaussian random variable with mean
$0$ and variance $1$. Our choice of scaling for the cube is immaterial
as the quantity in (\ref{eqn:CLTcube}) is invariant under scaling and
translation of $[-1,1]^N$.

Gaussian random matrices play a central role in the proof of Theorem
\ref{thm:CLTcube}, as is often the case with results about random
projections onto subspaces $E\in G_{N,n}$. Specifically, we let $G$ be
an $n\times N$ random matrix with independent columns $g_1,\ldots,g_N$
distributed according to standard Gaussian measure $\gamma_n$ on
$\R^n$, i.e.,
\begin{equation*}
  d\gamma_n(x) = (2\pi)^{-n/2}e^{-\norm{x}_2^2/2}dx.
\end{equation*}
We view $G$ as a linear operator from $\R^N$ to $\R^n$.  If $C\subset
\R^N$ is any convex body, then
\begin{equation}
  \label{eqn:splitting}
  \abs{GC} = \det{(GG^*)}^{\frac{1}{2}} \abs{P_E C},
\end{equation} 
where $E= \mathop{\rm Range}(G^*)$ is distributed uniformly on
$G_{N,n}$. Moreover, $\det{(GG^*)}^{1/2}$ and $\abs{P_E C}$ are
independent. The latter fact underlies the Gaussian representation of
intrinsic volumes, as proved by B. Tsirelson in \cite{Tsirelson} (see
also \cite{Vitale_Gaussian_rep}); it is also used in R. Vitale's
probabilistic derivation of the Steiner formula \cite{Vitale_Steiner}.
Passing between Gaussian vectors and random orthogonal projections is
useful in a variety of contexts, e.g., \cite{James}, \cite{Miles},
\cite{AS_projections}, \cite{Baryshnikov_Vitale},
\cite{BH_projections}, \cite{MTJ_projections}, \cite{DT_hypercubes},
\cite{PaoPiv_small}.  As we will show, however, it is a delicate
matter to use (\ref{eqn:splitting}) to prove limit theorems,
especially with the normalization required in Theorem
\ref{thm:CLTcube}. Our path will involve analyzing asymptotic
normality of $\abs{G B_{\infty}^N}$ before dealing with the quotient
$\abs{G B_{\infty}^N}/\det{(GG^*)}^{1/2}$.

The set
\begin{equation*}
GB_{\infty}^{N} = \left\{\sum_{i=1}^N \lambda_i g_i:\abs{\lambda_i}\ls
1, i=1,\ldots, N \right\}
\end{equation*}
is a random zonotope, i.e., a Minkowski sum of the random segments
$[-g_i,g_i]=\{\lambda g_i: \abs{\lambda}\ls 1 \}$.  By the well-known
zonotope volume formula (e.g. \cite{McMullen}),
$X_N=\abs{GB_{\infty}^N}$ satisfies
\begin{equation}
  \label{eqn:XNvolintro}
X_N = 2^n \sum_{1\ls i_1<\ldots<i_n\ls N}\abs{\det{[g_{i_1}\cdots
      g_{i_n}]}},
\end{equation} 
where $\det{[g_{i_1}\cdots g_{i_n}]}$ is the determinant of the matrix
with columns $g_{i_1},\ldots,g_{i_n}$. The quantity
\begin{equation*}
  U_N = \frac{1}{{N\choose n}} \sum_{1\ls i_1<\ldots<i_n\ls
    N}\abs{\det{[g_{i_1}\cdots g_{i_n}]}}
\end{equation*}
is a U-statistic and central limit theorems for U-statistics go back
to W. Hoeffding \cite{Hoeffding}.  In fact, formula
(\ref{eqn:XNvolintro}) for $X_N$ is simply a special case of
Minkowski's theorem on mixed volumes of convex sets (see
\S\ref{section:preliminaries}).  In \cite{Vitale_CLT}, R. Vitale
proved a central limit theorem for Minkowski sums of more general
random convex sets, using mixed volumes and U-statistics (discussed in
detail below). In particular, it follows from Vitale's results that
$X_N$ satisfies a central limit theorem, namely,
\begin{equation}
  \label{eqn:XNnormalVitale}
  \frac{X_N-\EE X_N}{s_{N,n}} \overset{d}{\rightarrow}
  \mathcal{N}(0,1),
\end{equation}
where $s_{N,n}$ is a certain conditional standard deviation (see
Theorem \ref{thm:Vitale_CLT}). Using Vitale's result and a more recent
randomization inequality for U-statistics \cite[Chapter 3]{dlPG}, we
show in \S\ref{section:proof} that $X_N$ satisfies a central limit
theorem with the canonical normalization:
\begin{equation}
  \label{eqn:XNnormal}
  \frac{X_N- \EE X_N}{\sqrt{\var{X_N}}} \overset{d}{\rightarrow}
  \mathcal{N}(0,1) \text{ as } N\rightarrow \infty.
\end{equation}

It is tempting to think that the latter central limit theorem for
$X_N$ easily yields Theorem \ref{thm:CLTcube}. However, for a family
of convex bodies $C=C_N\subset \R^N$, $N=n, n+1, \ldots$, asymptotic
normality of $\abs{GC}$ is not sufficient to conclude that
$\abs{P_{E(N)} C}$ is asymptotically normal. For example, if
$C=B_2^N$, then $\abs{GB_2^N}= \det{(GG^*)}^{1/2}\abs{B_2^n}$ is
asymptotically normal (e.g., \cite[Theorems 4.2.3, 7.5.3]{Anderson}),
however $\abs{P_{E(N)}B_2^N}$ is constant.

In fact, as we show in Proposition \ref{prop:ZNexpansion}, both $X_N$
and $\det{(GG^*)}^{1/2}$ contribute to asymptotic normality of
$Z_N=\abs{P_{E(N)}B_{\infty}^N}$, a technical difficulty that requires
careful analysis.  In particular, the aforementioned randomization
inequality from \cite[Chapter 3]{dlPG} is invoked again to deal with
the canonical normalization for $Z_N$ in Theorem \ref{thm:CLTcube}.
As a by-product, we also obtain the limiting behavior of the variance
of $Z_N$ as $N\rightarrow \infty$.

We mention that when $n=1$, Theorem \ref{thm:CLTcube}
implies that if $(\theta_N)$ is a sequence of random vectors with
$\theta_N$ distributed uniformly on the sphere $S^{N-1}$, then the
$\ell_1$-norm $\norm{\cdot}_1$ (the support function of the cube)
satisfies
\begin{equation*}
\frac{\norm{\theta_N}_1- \EE
  \norm{\theta_N}_1}{\sqrt{\var{\norm{\theta_N}_1}}}
\overset{d}{\rightarrow } \mathcal{N}(0,1) \text{ as } N\rightarrow
\infty.
\end{equation*}

The central limit theorem for $X_N$ in (\ref{eqn:XNnormal}) can be seen as a
counter-part to a recent result of I. B\'{a}r\'{a}ny and V. Vu
\cite{BaranyVu} for convex hulls of Gaussian vectors.  In particular,
when $n\gr 2$ the quantity $V_N = \abs{\conv{g_1,\ldots,g_N}}$
satisfies
\begin{equation*}
  \frac{V_N- \EE V_N}{\sqrt{\var{V_N}}}\overset{d}{\rightarrow}
  \mathcal{N}(0,1) \text{ as } N\rightarrow \infty;
\end{equation*} 
see the latter article for the corresponding Berry-Esseen type
estimate.  The latter result is one of several recent deep central
limit theorems in stochastic geometry concerning random convex hulls, e.g.,
\cite{Reitzner_CLT}, \cite{Vu_CLT}, \cite{BaranyReitzner}.  The
techniques used in this paper are different and the main focus here is
to understand the Grassmannian setting.

Lastly, for a thorough exposition of the properties of the cube, see
\cite{Zong}.

\section{Preliminaries}
\label{section:preliminaries}

The setting is $\R^n$ with the usual inner-product $\langle \cdot,
\cdot \rangle$ and Euclidean norm $\norm{\cdot}_2$; $n$-dimensional
Lebesgue measure is denoted by $\abs{\cdot}$. For sets $A,B \subset
\R^n$ and scalars $\alpha, \beta\in \R$, we define $\alpha A +\beta B$
by usual scalar multiplication and Minkowski addition: $\alpha A
+\beta B = \{\alpha a +\beta b: a\in A, b\in B\}$.

\subsection{Mixed volumes}

The mixed volume $V(K_1,\ldots,K_n)$ of compact convex sets
$K_1,\ldots,K_n$ in $\R^n$ is defined by
\begin{equation*}
   V(K_1,\ldots,K_n)  = \frac{1}{n!} \sum_{j=1}^n (-1)^{n+j} 
   \sum_{i_1<\ldots < i_j}\vol{K_{i_1}+\ldots+K_{i_j}}.
 \end{equation*}
By a theorem of Minkowski, if $t_1,\ldots,t_N$ are non-negative real
numbers then the volume of $K=t_1K_1+\ldots+t_NK_N$ can be expressed
as
\begin{equation}
  \label{eqn:Minkowski}
  \vol{K} = 
  \sum_{i_1=1}^N\cdots \sum_{i_n=1}^N 
  V(K_{i_1},\ldots,K_{i_n})t_{i_1}\cdots t_{i_n}.
\end{equation}  
The coefficients $V(K_{i_1},\ldots, K_{i_n})$ are non-negative and
invariant under permutations of their arguments.  When the $K_i$'s are
origin-symmetric line segments, say $K_i=[-x_i,x_i]=\{\lambda
x_i:\abs{\lambda}\ls 1\}$, for some $x_1,\ldots,x_n\in \R^n$, we 
simplify the notation and write 
\begin{equation}
  \label{eqn:mixedvol_notation}
  V(x_1,\ldots,x_n)=V([-x_1,x_1],\ldots,[-x_n,x_n]).
\end{equation}
We will make use of the following properties:
\begin{itemize}
\item[(i)] $V(K_1,\ldots,K_n)>0$ if and only if there are line
  segments $L_i\subset K_i$ with linearly independent directions.
\item[(ii)] If $x_1,\ldots, x_n\in \R^n$, then
  \begin{equation}
    \label{eqn:mixedvol_det}
    n!V(x_1,\ldots,x_n) = 2^n\abs{\det{[x_1\cdots
          x_n]}},
  \end{equation}
  where $\det{[x_1\cdots x_n]}$ denotes the determinant of the matrix
  with columns $x_1,\ldots,x_n$.
\item[(iii)]$V(K_1,\ldots,K_n)$ is increasing in each argument (with
  respect to inclusion).
\end{itemize}
For further background we refer the reader to \cite[Chapter
5]{Schneider} or \cite[Appendix A]{Gardner}.

A {\it zonotope} is a Minkowski sum of line segments. If
$x_1,\ldots,x_N$ are vectors in $\R^n$, then 
\begin{equation*}
  \sum_{i=1}^N[-x_i,x_i] 
  =\left\{\sum_{i=1}^N\lambda_i x_i: \abs{\lambda_i}\ls 1, \;i=1,\ldots,N\right\}.
\end{equation*} 
Alternatively, a zonotope can be seen as a linear image of the cube
$B_{\infty}^N = [-1,1]^N$.  If $x_1,\ldots,x_N\in \R^n$, one can view
the $n\times N$ matrix $X = [x_1\cdots x_N]$ as a linear operator from
$\R^N$ to $\R^n$; in this case, $X B_{\infty}^N = \sum_{i=1}^N
[-x_i,x_i]$.

By (\ref{eqn:Minkowski}) and properties (i) and (ii) of mixed volumes,
the volume of $\sum_{i=1}^N[-x_i,x_i]$ satisfies
\begin{equation}
  \label{eqn:zonotope_volformula}
\Bigl\lvert\sum_{i=1}^N[-x_i,x_i]\Bigr\rvert = 2^n \sum_{1\ls
  i_1<\ldots<i_n\ls N} \abs{\det{[x_{i_1}\cdots x_{i_n}]}}.
\end{equation}
Note that for $x_1,\ldots,x_n\in \R^n$, 
\begin{equation}
  \label{eqn:det_formula}
  \abs{\det{[x_1\cdots x_n]}} = \norm{x_1}_2\norm{P_{F_1^{\perp}}x_2}_2
  \cdots \norm{P_{F_{n-1}^{\perp}}x_n }_2,
\end{equation}
where $F_k=\mathop{\rm span}\{x_1,\ldots,x_k\}$ for $k=1,\ldots,n-1$
(which can be proved using Gram-Schmidt orthogonalization, e.g.,
\cite[Theorem 7.5.1]{Anderson}).

We will also use the Cauchy-Binet formula.  Let $x_1,\ldots,
x_N\in \R^n$ and let $X$ be the $n\times N$ matrix with columns
$x_1,\ldots,x_N$, i.e., $X= [x_1\cdots x_N]$.  Then
\begin{equation}
  \label{eqn:CauchyBinet}
  \det{(XX^*)}^{\frac{1}{2}} = \sum_{1\ls i_1<\ldots <i_n\ls N}
  \det{[x_{i_1}\cdots x_{i_n}]}^2;
\end{equation}for a proof, see, e.g., \cite[\S 3.2]{EvansGariepy}.

\subsection{Slutsky's theorem}

We will make frequent use of Slutsky's theorem on convergence of
random variables (see, e.g., \cite[\S 1.5.4]{Serfling}).

\begin{theorem}
  \label{thm:Slutsky}
  Let $(X_N)$ and $(\alpha_N)$ be sequences of random variables.
  Suppose that $X_N\overset{d}{\rightarrow} X_0$ and
  $\alpha_N\overset{\mathbb{P}}{\rightarrow} \alpha_0$, where
  $\alpha_0$ is a finite constant. Then
\begin{equation*}
  X_N + \alpha_N \overset{d}{\rightarrow} X_0 + \alpha_0
\end{equation*}and 
\begin{equation*}
  \alpha_N X_N \overset{d}{\rightarrow} \alpha_0 X_0.
\end{equation*}
\end{theorem}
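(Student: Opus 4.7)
The plan is to reduce both statements to a single auxiliary fact: if $W_N\overset{d}{\rightarrow} W_0$ and $Y_N\overset{\mathbb{P}}{\rightarrow} 0$, then $W_N+Y_N\overset{d}{\rightarrow} W_0$. For the sum, I would decompose $X_N+\alpha_N = (X_N+\alpha_0)+(\alpha_N-\alpha_0)$; since translation by the constant $\alpha_0$ is a homeomorphism of $\R$, $X_N+\alpha_0\overset{d}{\rightarrow} X_0+\alpha_0$ follows immediately from the distribution-function definition of convergence, while $\alpha_N-\alpha_0\overset{\mathbb{P}}{\rightarrow} 0$ by hypothesis, so the auxiliary fact applied to $W_N=X_N+\alpha_0$ and $Y_N=\alpha_N-\alpha_0$ yields the first claim. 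For the product, I would write $\alpha_N X_N = \alpha_0 X_N + (\alpha_N-\alpha_0)X_N$, note that $\alpha_0 X_N\overset{d}{\rightarrow} \alpha_0 X_0$ by the analogous scaling argument (splitting the trivial case $\alpha_0=0$ from $\alpha_0\ne 0$), and then reduce to showing $(\alpha_N-\alpha_0)X_N\overset{\mathbb{P}}{\rightarrow} 0$, after which a second application of the auxiliary fact gives the conclusion.

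To prove the auxiliary fact I would work directly with distribution functions. Let $F$ be the distribution function of $W_0$ and fix a continuity point $x$ of $F$. For any $\varepsilon>0$, elementary inclusions of events give
\begin{equation*}
\PP{W_N+Y_N\ls x}\ls \PP{W_N\ls x+\varepsilon}+\PP{\abs{Y_N}>\varepsilon}
\end{equation*}
and
\begin{equation*}
\PP{W_N\ls x-\varepsilon}\ls \PP{W_N+Y_N\ls x}+\PP{\abs{Y_N}>\varepsilon}.
\end{equation*}
Restricting $\varepsilon$ to values for which $x\pm\varepsilon$ are both continuity points of $F$ (the excluded set being countable), $\PP{\abs{Y_N}>\varepsilon}\to 0$ and $\PP{W_N\ls x\pm\varepsilon}\to F(x\pm\varepsilon)$, so
\begin{equation*}
F(x-\varepsilon)\ls \liminf_{N}\PP{W_N+Y_N\ls x}\ls \limsup_{N}\PP{W_N+Y_N\ls x}\ls F(x+\varepsilon).
\end{equation*}
Letting $\varepsilon\to 0$ through such values, continuity of $F$ at $x$ forces $\lim_{N}\PP{W_N+Y_N\ls x}=F(x)$, which is the sought convergence in distribution.

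For the remaining step in the product case I would invoke tightness: convergence in distribution of $X_N$ implies that for every $\eta>0$ there exists $M>0$ with $\sup_N \PP{\abs{X_N}>M}<\eta$. Then for any $\delta>0$,
\begin{equation*}
\PP{\abs{(\alpha_N-\alpha_0)X_N}>\delta}\ls \PP{\abs{X_N}>M}+\PP{\abs{\alpha_N-\alpha_0}>\delta/M},
\end{equation*}
so $\limsup_{N}\PP{\abs{(\alpha_N-\alpha_0)X_N}>\delta}\ls \eta$, and sending $\eta\to 0$ yields $(\alpha_N-\alpha_0)X_N\overset{\mathbb{P}}{\rightarrow} 0$, as required. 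I expect this tightness step to be the one genuine subtlety in the argument: the multiplicand $X_N$ need not be uniformly bounded pathwise, yet the tightness inherited from $X_N\overset{d}{\rightarrow} X_0$ is exactly what makes $(\alpha_N-\alpha_0)X_N$ negligible, and without it one cannot bridge from convergence of $\alpha_N-\alpha_0$ to convergence of the product.
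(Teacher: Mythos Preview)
Your proof is correct and follows the standard route (reduce to the lemma ``$W_N\overset{d}{\rightarrow}W_0$ and $Y_N\overset{\mathbb{P}}{\rightarrow}0$ imply $W_N+Y_N\overset{d}{\rightarrow}W_0$,'' prove this via distribution functions, and handle the product by tightness). Note, however, that the paper does not supply its own proof of this theorem: Slutsky's theorem is merely stated and attributed to a reference (Serfling, \S 1.5.4), so there is no in-paper argument to compare against. Your write-up would serve as a self-contained proof where the paper simply cites the literature.
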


Slutsky's theorem also applies when the $X_N$'s take values in $\R^k$
and satisfy $X_N\overset{d}{\rightarrow} X_0$ and $(A_N)$ is a
sequence of $m\times k$ random matrices such that
$A_N\overset{\mathbb{P}}{\rightarrow} A_0$ and the entries of $A_0$
are constants. In this case, $A_N X_N\overset{d}{\rightarrow} A_0
X_0$.

\section{U-statistics}
\label{section:Ustats}
In this section, we give the requisite results from the theory of
U-statistics needed to prove asymptotic normality of $X_N$ and $Z_N$
stated in the introduction.  For further background on U-statistics,
see e.g.  \cite{Serfling}, \cite{Rubin_Vitale}, \cite{dlPG}.

Let $X_1,X_2,\ldots$ be a sequence of i.i.d. random variables with
values in a measurable space $(S, \mathcal{S})$. Let $h:S^m
\rightarrow \R$ be a measurable function.  For $N\gr m$, the
U-statistic of order $m$ with kernel $h$ is defined by
\begin{equation}
  \label{eqn:Ustat}
  U_N = U_N(h) = \frac{(N-m)!}{N!} 
  \sum_{(i_1,\ldots,i_m)\in I_N^m} h(X_{i_1},\ldots,X_{i_m}), 
\end{equation}
where 
\begin{equation*}
  I_N^m=\left\{ (i_1,\ldots,i_m):i_j\in \N, 1\ls i_j\ls N, i_j\not =
  i_k \text{ if } j\not =k \right\}.
\end{equation*}
When $h$ is symmetric, i.e., $h(x_1, \ldots, x_m) = h(x_{\sigma(1)},
\ldots , x_{\sigma(m)})$ for every permutation $\sigma$ of $m$
elements, we can write
\begin{equation}
\label{eqn:Ustat_symmetric}
  U_N = U(X_1, \ldots ,X_N) = \frac{1}{ {N \choose m}} \sum_{1\ls
    i_1<\ldots<i_m\ls N} h( X_{i_1}, \ldots , X_{i_m});
\end{equation}
here the sum is taken over all ${N \choose m}$ subsets $\{i_1,\ldots,
i_m\}$ of $\{1,\ldots , N\}$.

Using the latter notation, we state several well-known results, due to
Hoeffding (see, e.g., \cite[Chapter 5]{Serfling}).

\begin{theorem} 
  \label{thm:Ustat_basics}
  For $N\gr m$, let $U_N$ be a statistic with kernel $h:S^m
  \rightarrow \R$.  Set $\zeta = \var{\EE [h(X_1,\ldots,X_m) | X_1]}$.
\begin{itemize}
\item[(1)]The variance of $U_N$ satisfies
\begin{equation*}
  \var{U_N} = \frac{m^2 \zeta }{N} + O(N^{-2}) \text{ as }N\rightarrow
  \infty.
\end{equation*}
\item[(2)] If $\EE \abs{h(X_1,\ldots,X_m)} <\infty$, then $U_N
  \overset{a.s.}{\rightarrow} \EE U_N$ as $N\rightarrow
  \infty$.
\item[(3)] If $\EE h^2(X_1, \ldots , X_m)< \infty$ and $\zeta >0$, then
  \begin{equation*}
    \sqrt{N} \left(\frac{ U_{N}- \EE U_N}{ m\sqrt{\zeta}
    }\right) \overset{d}{\rightarrow} {\cal{N}}(0,1) \text{ as }
    N\rightarrow \infty.
  \end{equation*}
\end{itemize}
\end{theorem}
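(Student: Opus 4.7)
Set $\theta = \EE h(X_1,\ldots,X_m)$ and, for $c=1,\ldots,m$, let
\begin{equation*}
h_c(x_1,\ldots,x_c) = \EE h(x_1,\ldots,x_c,X_{c+1},\ldots,X_m), \qquad \zeta_c = \var{h_c(X_1,\ldots,X_c)},
\end{equation*}
so that $\zeta_1=\zeta$. The engine behind all three parts is the basic covariance identity: if $I,J\subset\{1,\ldots,N\}$ satisfy $|I|=|J|=m$ and $|I\cap J|=c$, then independence of the $X_i$'s together with the tower property give $\mathrm{Cov}(h(X_I),h(X_J)) = \zeta_c$. Expanding $\var{U_N}$ as a double sum over such index pairs and grouping by $c$ yields
\begin{equation*}
\var{U_N} = {N \choose m}^{-1}\sum_{c=1}^m {m\choose c}{N-m\choose m-c}\zeta_c.
\end{equation*}
Since the $c$-th coefficient is of order $N^{-c}$, isolating $c=1$ produces the stated $m^2\zeta/N + O(N^{-2})$, which proves (1).

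For (2), the natural tool is a reverse martingale argument. Let $\mathcal{F}_N$ be the $\sigma$-algebra generated by the permutation-invariant Borel functions of $X_1,\ldots,X_N$ together with $X_{N+1},X_{N+2},\ldots$. By exchangeability, $\EE[h(X_{i_1},\ldots,X_{i_m})\mid \mathcal{F}_N]=U_N$ for every $m$-tuple of distinct indices in $\{1,\ldots,N\}$, and the inclusion $\mathcal{F}_{N+1}\subset\mathcal{F}_N$ is immediate, so $(U_N,\mathcal{F}_N)_{N\gr m}$ is a reverse martingale. Since $\EE|U_N|\ls \EE|h(X_1,\ldots,X_m)|<\infty$, Doob's reverse-martingale convergence theorem gives a.s. convergence of $U_N$ to a limit $U_\infty$. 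This $U_\infty$ is measurable with respect to the tail $\sigma$-algebra of $(X_i)$, hence constant a.s. by the Hewitt--Savage zero-one law, and therefore equals $\EE U_N=\theta$.

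For (3) the plan is Hoeffding's projection. Write $g_1(x)=h_1(x)-\theta$ and define
\begin{equation*}
\widetilde U_N = \theta + \frac{m}{N}\sum_{i=1}^N g_1(X_i),
\end{equation*}
a shift of an i.i.d.\ sum with variance $m^2\zeta/N$. The classical CLT immediately yields $\sqrt{N}(\widetilde U_N-\theta)/(m\sqrt{\zeta})\overset{d}{\rightarrow}\mathcal{N}(0,1)$. To transfer this to $U_N$ I would apply Slutsky (Theorem~\ref{thm:Slutsky}) after showing that $\sqrt{N}(U_N-\widetilde U_N)\overset{\mathbb{P}}{\rightarrow}0$; since both statistics have mean $\theta$, it suffices to establish $\var{U_N-\widetilde U_N}=o(1/N)$.

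The one nontrivial step is this last variance estimate, and I would treat it via the Hoeffding (H-)decomposition: express $h-\theta$ as an orthogonal telescoping sum $h-\theta=\sum_{c=1}^m \sum_{|A|=c}\phi_c^A$, where each $\phi_c^A$ depends only on $\{X_i\}_{i\in A}$ and is completely degenerate, in the sense that its conditional expectation vanishes upon integrating out any one of its arguments. The induced decomposition of $U_N-\theta$ into degenerate U-statistics of orders $1,2,\ldots,m$ is orthogonal in $L^2$, and its order-$1$ component is exactly $\widetilde U_N-\theta$. Reapplying the covariance counting from (1) to each degenerate piece shows that the order-$c$ component contributes $O(N^{-c})$ to the variance, so $\var{U_N-\widetilde U_N}=O(N^{-2})$, completing the argument.
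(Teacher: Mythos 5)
Your proof is correct, but note that the paper does not actually prove this theorem: it states these three facts as classical results of Hoeffding and simply points to Serfling's \emph{Approximation Theorems of Mathematical Statistics}, Chapter 5. There is therefore no ``paper's proof'' to compare against; what you have written is essentially the standard textbook argument that the citation is pointing to. Part (1) via the covariance-counting identity $\var{U_N} = \binom{N}{m}^{-1}\sum_{c=1}^m\binom{m}{c}\binom{N-m}{m-c}\zeta_c$, part (2) via the reverse-martingale argument of Hoeffding/Berk, and part (3) via Hoeffding's projection and the orthogonal H-decomposition of the centered kernel into completely degenerate components of orders $1,\ldots,m$ --- these are exactly the arguments in Serfling. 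All of your key computations check out: the coefficient of $\zeta_1$ in (1) is indeed $\binom{m}{1}\binom{N-m}{m-1}/\binom{N}{m} = m^2/N + O(N^{-2})$; the order-$1$ term in the induced decomposition of $U_N-\theta$ has weight $\binom{N-1}{m-1}/\binom{N}{m} = m/N$, so it agrees with your $\widetilde U_N-\theta$; and the degenerate order-$c$ components contribute $O(N^{-c})$ to the variance, giving $\var{U_N-\widetilde U_N}=O(N^{-2})=o(1/N)$ as needed.

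One small terminological slip in part (2): the limit $U_\infty$ is measurable with respect to $\bigcap_N\mathcal F_N$, which is the \emph{exchangeable} $\sigma$-algebra of the sequence $(X_i)$, not the tail $\sigma$-algebra. This is precisely why you need Hewitt--Savage (rather than Kolmogorov's zero-one law), and you do invoke the right theorem, so the substance is fine --- only the label is off. You may also wish to spell out why $\mathcal F_{N+1}\subset\mathcal F_N$: a function symmetric in $X_1,\ldots,X_{N+1}$ is in particular symmetric in $X_1,\ldots,X_N$, so it factors through the symmetric functions of the first $N$ together with $X_{N+1},X_{N+2},\ldots$. But these are polishing remarks; the proof as a whole is sound and matches the cited literature.
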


The corresponding Berry-Esseen type bounds are also available (see,
e.g,. \cite[page 193]{Serfling}), stated here in terms of the function
\begin{equation*}
  \Phi(t)=\frac{1}{\sqrt{2\pi}}\int_{-\infty}^t e^{-s^2/2}ds.
\end{equation*}

\begin{theorem}
  With the preceding notation, suppose that $\xi=\EE \abs{h(X_{1},
    \ldots , X_{m})}^3 <\infty$ and
  \begin{equation*}
    \zeta = \var{\EE [ h(X_1,\ldots,X_m) | X_1]}>0.
\end{equation*}
Then
  \begin{equation*}
    \sup_{t\in \R} \left\lvert\Prob{ \sqrt{N}\left(\frac{U_N-\EE
        U_N}{m\sqrt{\zeta}} \right) \ls t}- \Phi(t)\right\rvert \ls
    \frac{c \xi}{ (m^{2} \zeta)^{\frac{3}{2}}\sqrt{N}},
  \end{equation*}
  where $c>0$ is an universal constant. 
\end{theorem}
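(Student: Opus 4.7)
The plan is to use the \emph{Hoeffding/Hájek decomposition}: approximate the centered U-statistic by a sum of i.i.d.\ random variables, apply the classical one-dimensional Berry--Esseen theorem to that linear part, and then control the remainder to transfer the bound to $U_N$ itself.

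First I would introduce $h_1(x) = \EE[h(x, X_2, \ldots, X_m)] - \EE h(X_1, \ldots, X_m)$, so that $\EE h_1(X_1) = 0$ and, by the symmetry of $h$, $\var{h_1(X_1)} = \zeta$. A direct conditioning computation, using that $\EE[h(X_{i_1},\ldots,X_{i_m}) \mid X_j]$ equals $\EE h$ when $j \notin \{i_1,\ldots,i_m\}$ and equals $h_1(X_j) + \EE h$ otherwise, yields the Hájek projection
\begin{equation*}
  \hat{L}_N = \sum_{i=1}^N \EE[U_N - \EE U_N \mid X_i] = \frac{m}{N}\sum_{i=1}^N h_1(X_i),
\end{equation*}
a sum of i.i.d.\ centered random variables with total variance $m^2\zeta/N$.

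Next, apply the classical Berry--Esseen theorem to this sum. Jensen's inequality gives $\EE\abs{h_1(X_1)}^3 \ls c_0 \xi$ for an absolute constant $c_0$ (after absorbing the centering $\EE h$ via $(a+b)^3 \ls 4(a^3+b^3)$ for $a,b\gr 0$), so with the normalization $\sqrt{N}\hat{L}_N/(m\sqrt{\zeta})$ one obtains
\begin{equation*}
  \sup_{t\in\R}\left\lvert\Prob{\frac{\sqrt{N}\hat{L}_N}{m\sqrt{\zeta}} \ls t} - \Phi(t)\right\rvert \ls \frac{c_1 \xi}{(m^2\zeta)^{3/2}\sqrt{N}},
\end{equation*}
which is exactly the claimed bound with $\hat{L}_N$ in place of $U_N - \EE U_N$.

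Finally, pass from $\hat{L}_N$ to $U_N - \EE U_N$ by controlling the remainder $R_N = (U_N - \EE U_N) - \hat{L}_N$. Orthogonality of the Hájek projection and part~(1) of Theorem~\ref{thm:Ustat_basics} give $\EE R_N^2 = \var{U_N} - \var{\hat{L}_N} = O(N^{-2})$, since $\var{\hat{L}_N} = m^2\zeta/N$. This is where the main obstacle lies: a crude combination of this $L^2$ bound with Chebyshev's inequality and a smoothing lemma of the form $\sup_t \abs{\PP(Y+Z \ls t) - \Phi(t)} \ls \sup_t \abs{\PP(Y \ls t) - \Phi(t)} + \PP(\abs{Z}>\eps) + \eps/\sqrt{2\pi}$ produces only an $N^{-1/3}$ Kolmogorov error, which is too weak for the advertised $N^{-1/2}$ rate. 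To sharpen this one uses the full Hoeffding decomposition of $R_N$ into orthogonal projections onto functions of $k$ variables for $2 \ls k \ls m$, moment estimates on each piece in terms of $\xi$, and a characteristic-function argument; the details are classical and can be found in \cite{Serfling}. For the applications in the present paper $m = n$ is fixed and $\zeta$ is bounded below, so only the qualitative $N^{-1/2}$ rate will be needed.
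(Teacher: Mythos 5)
The paper does not prove this theorem; it states it as a known result and points to \cite[p.~193]{Serfling}, so there is no in-paper argument to compare your proposal against. That said, your sketch correctly identifies the standard structure of the proof: Hoeffding/H\'ajek decomposition, Berry--Esseen applied to the linear projection, and a remainder estimate. Your projection formula $\hat{L}_N = \frac{m}{N}\sum_{i=1}^N h_1(X_i)$, the bound $\EE\abs{h_1(X_1)}^3 \ls c_0\xi$ via conditional Jensen and the elementary inequality for $|a+b|^3$, and the $L^2$ bound $\EE R_N^2 = O(N^{-2})$ via orthogonality and Theorem~\ref{thm:Ustat_basics}(1) are all sound.

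However, there is a genuine gap, and you are candid about it: the passage from $\hat{L}_N$ to $U_N-\EE U_N$ is the heart of the theorem, and your smoothing-plus-Chebyshev argument yields only $O(N^{-1/3})$ in the Kolmogorov distance, not the advertised $O(N^{-1/2})$. Saying that ``the details are classical and can be found in \cite{Serfling}'' does not supply the missing argument. The sharp rate requires either (a) the full Hoeffding decomposition of $R_N$ into degenerate kernels of orders $2,\ldots,m$ together with higher-moment bounds on each piece, or (b) a direct characteristic-function / Esseen-inequality argument that exploits the conditional independence structure rather than treating $R_N$ as a black box in $L^2$. Either route is several pages of work (Callaert--Janssen, or the treatment in Serfling), not a routine optimization over $\eps$. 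Your closing remark that ``only the qualitative $N^{-1/2}$ rate will be needed'' does not alleviate this: the theorem as stated asserts the quantitative $N^{-1/2}$ bound, so a proof must produce it. As a self-contained proof your proposal is therefore incomplete; as a summary of where the known proof lives and what its skeleton is, it is accurate.
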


\subsection{U-statistics and mixed volumes}

Let $\mathcal{C}_n$ denote the class of all compact, convex sets in
$\R^n$.  A topology on $\mathcal{C}_N$ is induced by the Hausdorff
metric
\begin{equation*}
   \delta^H(K,L) = \inf\{\delta >0: K \subset L +\delta B_2^n,
   L\subset K+\delta B_2^n\},
\end{equation*}
where $B_2^n$ is the Euclidean ball of radius one. A random convex set
is a Borel measurable map from a probability space into
$\mathcal{C}_n$. A key ingredient in our proof is the following
theorem for Minkowski sums of random convex sets due to R. Vitale
\cite{Vitale_CLT}; we include the proof for completeness.

\begin{theorem} 
  \label{thm:Vitale_CLT}
  Let $n\gr 1$ be an integer. Suppose that $K_1,K_2,\ldots$ are
  i.i.d. random convex sets in $\R^n$ such that $\EE \sup_{x\in
    K_1}\norm{x}_2<\infty$. Set $V_N =\abs{\sum_{i=1}^N K_i}$ and
  suppose that $\EE V(K_1,\ldots,K_n)^2<\infty$ and furthermore that
  $\zeta = \var{\EE [V(K_1,\ldots,K_n)| K_1]} >0$. Then
\begin{equation*}
  \sqrt{N}\left(\frac{V_N - \EE V_N}{ (N)_n n\sqrt{\zeta}}\right)
  \overset{d}{\rightarrow} \mathcal{N}(0,1) \text{ as } N\rightarrow
  \infty,
\end{equation*}where $(N)_n=\frac{N!}{(N-n)!}$.
\end{theorem}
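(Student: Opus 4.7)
The plan is to expand $V_N$ via Minkowski's mixed-volume formula, identify the dominant piece as a symmetric U-statistic of order $n$, apply Hoeffding's CLT (Theorem \ref{thm:Ustat_basics}(3)) to that piece, and then show that the remaining diagonal terms are negligible at the relevant scale. Specifically, taking $t_1 = \cdots = t_N = 1$ in (\ref{eqn:Minkowski}) gives
\begin{equation*}
V_N \;=\; \sum_{i_1=1}^N \cdots \sum_{i_n=1}^N V(K_{i_1}, \ldots, K_{i_n}) \;=\; M_N + R_N,
\end{equation*}
where $M_N$ is the sum over $(i_1,\ldots,i_n) \in I_N^n$ (distinct indices) and $R_N$ collects the tuples with at least one repetition. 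Since mixed volumes are symmetric in their arguments, $M_N = (N)_n U_N$ where $U_N$ is the symmetric U-statistic of order $n$ with kernel $h(K_1,\ldots,K_n) = V(K_1,\ldots,K_n)$. By the hypotheses $\EE h^2 < \infty$ and $\zeta > 0$, Theorem \ref{thm:Ustat_basics}(3) yields
\begin{equation*}
\sqrt{N}\,\frac{U_N - \EE U_N}{n\sqrt{\zeta}} \overset{d}{\longrightarrow} \mathcal{N}(0,1).
\end{equation*}

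Next I would show that $R_N$ contributes nothing to the limit, i.e. that
\begin{equation*}
\frac{\sqrt{N}\,(R_N - \EE R_N)}{(N)_n\, n\sqrt{\zeta}} \overset{\PP}{\longrightarrow} 0.
\end{equation*}
Group the terms of $R_N$ by the partition structure of the multi-index $(i_1,\ldots,i_n)$: for each partition of $\{1,\ldots,n\}$ into $k$ blocks with $k < n$, one obtains a U-statistic of order $k$ whose kernel is a mixed volume with repeated entries of the form $V(K_{j_1}^{m_1},\ldots,K_{j_k}^{m_k})$ (where $m_1+\cdots+m_k = n$). Since there are $O(N^k)$ such tuples and $(N)_n \sim N^n$, this piece is of order $N^{k}$ in mean and, via Chebyshev together with the $O(N^{-1})$ variance of each U-statistic of order $k$, has standard deviation of order $N^{k-1/2}$. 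Taking the largest $k = n-1$ and dividing by $(N)_n \sqrt{N} \sim N^{n+1/2}$ gives a quantity of order $N^{-2}$, which vanishes.

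The main obstacle is the $L^2$ control of these lower-order kernels $V(K^{m_1}_{j_1},\ldots,K^{m_k}_{j_k})$: the stated moment hypotheses are $\EE \sup_{x\in K_1}\norm{x}_2 < \infty$ and $\EE V(K_1,\ldots,K_n)^2 < \infty$, but the sub-kernels are not immediately covered by either. One handles this via the monotonicity of mixed volumes (property (iii)): if $K_j \subset r_j B_2^n$ with $r_j = \sup_{x \in K_j}\norm{x}_2$, then $V(K_{j_1}^{m_1},\ldots,K_{j_k}^{m_k}) \leq \vol{B_2^n}\prod_\ell r_{j_\ell}^{m_\ell}$, and one combines this with the $L^2$ mixed-volume hypothesis (applied in the pure-diagonal cases) to extract the integrability needed for each sub-kernel, possibly via Cauchy--Schwarz against the generic bound $V(K_1,\ldots,K_n) \leq \vol{B_2^n}\prod \norm{K_j}$. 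Having established the negligibility of $R_N$, Slutsky's theorem (Theorem \ref{thm:Slutsky}) combines the CLT for $U_N$ with the vanishing remainder to yield the claim.
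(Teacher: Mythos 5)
Your proposal follows the same strategy the paper uses: expand $V_N$ via Minkowski's formula \eqref{eqn:Minkowski}, separate the sum over distinct indices (giving $(N)_n U_N$ for the order-$n$ U-statistic $U_N$ with kernel $V$), apply Hoeffding's CLT (Theorem~\ref{thm:Ustat_basics}(3)) to $U_N$, show the diagonal remainder is negligible, and finish with Slutsky. The differences are in how the remainder is dismissed.

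The paper's treatment of the remainder is leaner. Writing $\frac{1}{(N)_n}V_N = U_N + S_N$ with $S_N = \frac{1}{(N)_n}\sum_{J}V(K_{i_1},\ldots,K_{i_n})$ (where $J$ is the set of index tuples with at least one repetition), it only uses $\abs{J}/(N)_n = O(1/N)$: since $S_N\gr 0$, Markov's inequality applied to $\sqrt{N}S_N$ together with $\sqrt{N}\EE S_N \to 0$ gives $\sqrt{N}(S_N-\EE S_N)\overset{\PP}{\to}0$. This only needs \emph{first} moments of the repeated-argument kernels. Your version instead groups $R_N$ by partition type, gets a U-statistic for each block count $k<n$, and invokes Chebyshev, which requires \emph{second} moments of the sub-kernels $V(K_{j_1}^{m_1},\ldots,K_{j_k}^{m_k})$ --- a strictly harder demand. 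You are right that the stated hypotheses do not immediately furnish these moments; the bound $V(K_{j_1}^{m_1},\ldots)\ls \vol{B_2^n}\prod_\ell \norm{K_{j_\ell}}^{m_\ell}$ leads to factors like $\EE\norm{K_1}^{m}$ with $m>1$, which $\EE\sup_{x\in K_1}\norm{x}_2<\infty$ does not control, and the hypothesis $\EE V(K_1,\ldots,K_n)^2<\infty$ concerns \emph{independent} arguments, so the Cauchy--Schwarz route you sketch does not obviously close. (To be fair, the paper is equally terse on this point, implicitly deferring to Vitale's original argument.) Finally, a minor arithmetic slip: the quantity to bound is $\sqrt{N}(R_N-\EE R_N)/((N)_n n\sqrt{\zeta})$, i.e.\ $R_N-\EE R_N$ is divided by $(N)_n/\sqrt{N}\sim N^{n-1/2}$, not by $(N)_n\sqrt{N}$; with your estimate $N^{k-1/2}$ for the $k$-block piece this gives order $N^{k-n}=N^{-1}$ at $k=n-1$, not $N^{-2}$. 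Either way it vanishes, so the conclusion is unaffected.
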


\begin{proof}Taking $h:(\mathcal{C}_n)^n \rightarrow \R$ to be
  $h(K_1,\ldots,K_n) = V(K_1,\ldots,K_n)$ and using
  (\ref{eqn:Minkowski}), we have
  \begin{equation}
    \label{eqn:mixed_ustat}
    \frac{1}{(N)_n}V_N = U_N +
    \frac{1}{(N)_n}\sum_{(i_1,\ldots,i_n)\in
      J}V(K_{i_1},\ldots,K_{i_n})
  \end{equation}
  where
  \begin{equation*}
    U_N = \frac{1}{(N)_n} \sum_{(i_1,\ldots,i_n)\in I_N^n} V(K_{i_1},\ldots,K_{i_n}),
  \end{equation*} 
  and $J=\{1,\ldots,N\}^n\backslash I_N^n$. Note that $\abs{J}/(N)_n =
  O(\frac{1}{N})$ and thus the second term on the right-hand side of
  (\ref{eqn:mixed_ustat}) tends to zero in probability. Applying
  Theorem \ref{thm:Ustat_basics}(3) and Slutsky's theorem leads to the
  desired conclusion.
\end{proof}

In the special case when the $K_i$'s are line segments, say $K_i =
[-X_i, X_i]$ where $X_1,X_2,\ldots$ are i.i.d. random vectors in
$\R^n$, the assumptions in the latter theorem can be readily verified
by using (\ref{eqn:mixedvol_det}).  Furthermore, if the $X_i$'s are
rotationally-invariant, the assumptions simplify further as follows
(essentially from \cite{Vitale_CLT}, stated here in a form that best
serves our purpose).

\begin{corollary}
  \label{cor:ri_zonotopes}
  Let $X=R\theta$ be a random vector such that $\theta$ is uniformly
  distributed on the sphere $S^{n-1}$ and $R\gr 0$ is independent of
  $\theta$ and satisfies $\EE R^2 <\infty$ and $\var{R}>0$.  For each
  $i=1,2,\ldots$, let $X_i=R_i\theta_i$ be independent copies of $X$.
  Let $D_n=\abs{\det{[\theta_1\cdots \theta_n]}}$ and set
  \begin{equation*}
    \zeta_1 = 4^n  \var{R} \EE^{2(n-1)} R \EE^2 D_n.
  \end{equation*}
  Then
  $V_N=\abs{\sum_{i=1}^N [-X_i,X_i]}$ satisfies
  \begin{equation*}
    \sqrt{N}\left(\frac{V_N - \EE V_N}{{N\choose n} n\sqrt{\zeta_1}}\right)
    \rightarrow\mathcal{N}(0,1) \text{ as } N\rightarrow \infty.
  \end{equation*}
\end{corollary}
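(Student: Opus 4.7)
The plan is to apply Theorem \ref{thm:Vitale_CLT} directly with $K_i = [-X_i, X_i]$, so the main task is to verify its hypotheses and to identify the conditional variance $\zeta = \var{\EE [V(K_1,\ldots,K_n) \mid K_1]}$ explicitly. Integrability is the easy part: $\sup_{x \in K_1}\norm{x}_2 = R_1$ and $\EE R_1 < \infty$ by Cauchy--Schwarz; and using (\ref{eqn:mixedvol_det}) together with $X_i = R_i\theta_i$ gives
\begin{equation*}
V(K_1,\ldots,K_n) = \frac{2^n}{n!}\, R_1\cdots R_n \, D_n,
\end{equation*}
where $D_n = \abs{\det{[\theta_1\cdots \theta_n]}} \ls 1$ by Hadamard's inequality. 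Hence $\EE V(K_1,\ldots,K_n)^2 \ls (2^n/n!)^2 (\EE R^2)^n < \infty$.

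The main computation is the conditional expectation. Conditioning on $K_1$ (equivalently on $R_1$ and $\theta_1$) and using independence of the $R_i$'s and $\theta_i$'s,
\begin{equation*}
\EE [V(K_1,\ldots,K_n)\mid K_1] = \frac{2^n}{n!}\, R_1 \,\EE^{n-1} R \, \EE[D_n \mid \theta_1].
\end{equation*}
The crucial observation is that $\EE[D_n\mid \theta_1]$ is actually a deterministic constant equal to $\EE D_n$: by rotational invariance of the uniform measure on $S^{n-1}$, for any fixed unit vector $u$ the conditional law of $(\theta_2,\ldots,\theta_n)$ given $\theta_1=u$ is invariant under rotations fixing $u$, and the Gram--Schmidt identity (\ref{eqn:det_formula}) shows that $D_n$ depends on $\theta_1$ only via the norms of projections of $\theta_2,\ldots,\theta_n$ onto subspaces determined by $\theta_1$ and the subsequent vectors, whose joint distribution is independent of the choice of $u$. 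Therefore
\begin{equation*}
\EE [V(K_1,\ldots,K_n)\mid K_1] = \frac{2^n}{n!}\, \EE^{n-1} R \,\EE D_n\, R_1,
\end{equation*}
and squaring the constant yields
\begin{equation*}
\zeta = \frac{4^n}{(n!)^2}\, \var{R}\, \EE^{2(n-1)} R \, \EE^{2} D_n = \frac{\zeta_1}{(n!)^2}.
\end{equation*}
The hypothesis $\var R > 0$ then gives $\zeta > 0$ (note $\EE R > 0$ since $R \gr 0$ is non-degenerate, and $\EE D_n > 0$ since $\theta_1,\ldots,\theta_n$ are almost surely linearly independent).

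Finally, the normalizations match: since $(N)_n = n!\binom{N}{n}$ and $n!\sqrt{\zeta} = \sqrt{\zeta_1}$, we have
\begin{equation*}
(N)_n\, n\sqrt{\zeta} = \binom{N}{n}\, n\sqrt{\zeta_1},
\end{equation*}
so Theorem \ref{thm:Vitale_CLT} yields exactly the stated conclusion. The only non-routine ingredient is the rotational-invariance argument showing $\EE[D_n \mid \theta_1] = \EE D_n$; everything else is bookkeeping.
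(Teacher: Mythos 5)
Your proposal is correct and follows essentially the same route as the paper: apply Theorem \ref{thm:Vitale_CLT} with $K_i=[-X_i,X_i]$, use the product formula $n!V(X_1,\ldots,X_n)=2^nR_1\cdots R_nD_n$ together with $D_n\le 1$ to verify the moment hypotheses, and show that $\EE[D_n\mid\theta_1]=\EE D_n$ so that the conditional expectation is a constant multiple of $R_1$, giving $\zeta=\zeta_1/(n!)^2$. The only cosmetic difference is that the paper reaches the constancy of $\EE[D_n\mid\theta_1]$ by iterating the Gram--Schmidt factorization (\ref{eqn:Dnexpansion}) and noting that $\EE\norm{P_{F_k^\perp}\theta_{k+1}}_2$ depends only on $\dim F_k$, while you invoke rotational invariance of the uniform measure on $S^{n-1}$ directly; these are the same observation.
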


\begin{proof}
  Plugging $X_i=R_i\theta_i$, $i=1,\ldots,n$, into
  (\ref{eqn:mixedvol_det}) gives
  \begin{equation}
    \label{eqn:mixedvol_lengths}
    n! V(X_1,\ldots,X_n) = 2^n R_1\cdots R_n D_n.
   \end{equation} 
  By (\ref{eqn:det_formula}), 
  \begin{equation}
    \label{eqn:Dnexpansion}
    D_n = \norm{\theta_1}_2\norm{P_{{F_1}^{\perp}}\theta_2}_2\cdots
    \norm{P_{F_{n-1}^{\perp}}\theta_n}_2,
  \end{equation}
  with $F_k = \mathop{\rm span}\{\theta_1,\ldots,\theta_k\}$ for
  $k=1,\ldots,n-1$. In
  particular, $D_n\ls 1$ and thus (\ref{eqn:mixedvol_lengths}) implies
  \begin{equation*}
    \EE V(X_1,\ldots,X_n)^2 \ls \frac{4^n}{(n!)^2}\EE^n R^2 <\infty.
  \end{equation*}
  Using (\ref{eqn:mixedvol_lengths}) once more, together with
  (\ref{eqn:Dnexpansion}), we have
  \begin{equation}
    \label{eqn:cond_var}
    n!\EE[ V(X_1,\ldots,X_n)\vert X_1 ] = 2^n R_1 \EE R_2\cdots \EE
    R_n \EE D_n;
  \end{equation} 
  here we have used the fact that $\EE
  \norm{P_{{F_k}^{\perp}}\theta_{k+1}}_2$ depends only on the
  dimension of $F_k$ (which is equal to $k$ a.s.) and that
  $\norm{\theta_1}_2=1$ a.s.  By (\ref{eqn:cond_var}) and our assumption
  $\var{R}>0$, we can apply Theorem \ref{thm:Vitale_CLT} with
  \begin{equation*}
    \zeta = \var{\EE[V(X_1,\ldots,X_n)| X_1]} = \frac{\zeta_1}{(n!)^2} >0,
  \end{equation*}
  where $\zeta_1$ is defined in the statement of the corollary.
\end{proof}

For further information on Theorem \ref{thm:Vitale_CLT}, including a
CLT for the random sets themselves, or the case when $\zeta=0$, see
\cite{Vitale_CLT} or \cite[Pg 232]{Molchanov}; see also \cite{Vitale_Area}.

Corollary \ref{cor:ri_zonotopes} implies the first central limit
theorem for $X_N$ stated in the introduction
(\ref{eqn:XNnormalVitale}).  However, to recover the central limit
theorem for $X_N$ in (\ref{eqn:XNnormal}), involving the variance
$\var{X_N}$ and not a conditional variance, some additional tools are
needed.

\subsection{Randomization}

In this subsection, we discuss a randomization inequality for
U-statistics. It will be used for variance estimates, the proof of the
central limit theorem for $X_N$ in (\ref{eqn:XNnormal}) and it will
also play a crucial role in the proof of Theorem \ref{thm:CLTcube}.

Using the notation at the beginning of \S \ref{section:Ustats},
suppose that $h:(\R^n)^m\rightarrow \R$ satisfies $\EE
\abs{h(X_1,\ldots,X_m)} <\infty$ and let $1<r\ls m$.  Following
\cite[Definition 3.5.1]{dlPG}, we say that $h$ is degenerate of order
$r-1$ if
\begin{equation*}
  \EE_{X_r,\ldots,X_m}h(x_1,\ldots,x_{r-1},X_r,\ldots,X_m) = \EE
  h(X_1,\ldots, X_m)
\end{equation*}
for all $x_1,\ldots,x_{r-1}\in \R^n$, and the function
\begin{equation*}
  S^r \ni (x_1,\ldots,x_r)\mapsto
  \EE_{X_{r+1},\ldots,X_m}h(x_1,\ldots,x_r,X_{r+1},\ldots,X_m)
\end{equation*}
is non-constant.  If $h$ is not degenerate of any positive order $r$,
we say it is non-degenerate or degenerate of order $0$. We will make
use of the following randomization theorem, which is a special case of
\cite[Theorem 3.5.3]{dlPG}.  

\begin{theorem}
  \label{thm:randomization}
  Let $1\ls r \ls m$ and $p\gr 1$. Suppose that $h:S^m\rightarrow \R$
  is degenerate of order $r-1$ and $\EE
  \abs{h(X_1,\ldots,X_m)}^p<\infty$.  Set
  \begin{equation*}
    f(x_1,\ldots,x_m) = h(x_1,\ldots,x_m)-\EE h(X_1,\ldots,X_m).
  \end{equation*}
  Let $\eps_1,\ldots,\eps_N$ denote i.i.d. Rademacher random
  variables, independent of $X_1,\ldots,X_N$.  Then
  \begin{eqnarray*}
    \lefteqn{\EE\bigl\vert\sum_{(i_1,\ldots,i_m)\in I_N^m}
      f(X_{i_1},\ldots,X_{i_m}) \bigr\vert^p}\\ & & \simeq_{m,p}
    \EE\bigl\lvert\sum_{(i_1,\ldots,i_m)\in I_N^m} \eps_{i_1}\cdots \eps_{i_r}
    f(X_{i_1},\ldots, X_{i_m})\bigr\rvert^p.
  \end{eqnarray*} 
\end{theorem}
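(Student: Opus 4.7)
The plan is to combine Hoeffding's orthogonal decomposition of $f$ with standard symmetrization and decoupling tools from the theory of U-statistics.

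First I would decompose $f$ via its Hoeffding expansion as
\begin{equation*}
  f(x_1,\ldots,x_m) = \sum_{k=r}^m \sum_{\substack{A\subset\{1,\ldots,m\}\\ \abs{A}=k}} (\pi_k f)(x_A),
\end{equation*}
where each $\pi_k f$ is a canonical (completely degenerate) symmetric kernel of order $k$; that is, $\EE[(\pi_k f)(x_1,\ldots,x_{k-1},X_k)]=0$ for almost every fixed $(x_1,\ldots,x_{k-1})$. The hypothesis that $h$ (equivalently $f$) is degenerate of order $r-1$ forces $\pi_k f\equiv 0$ for $k<r$, so the decomposition starts at level $r$. Plugging this expansion into $\sum_{(i_1,\ldots,i_m)\in I_N^m} f(X_{i_1},\ldots,X_{i_m})$ and regrouping tuples by their underlying set reduces the statistic to a linear combination of U-statistics $\sum_{I_N^k}(\pi_k f)(X_{j_1},\ldots,X_{j_k})$ for $k=r,\ldots,m$, and the same regrouping applies when the Rademacher weights $\eps_{i_1}\cdots\eps_{i_r}$ are inserted. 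It therefore suffices to prove the claimed equivalence for each canonical kernel of order $k\gr r$ separately and recombine the estimates via the triangle inequality in $L^p$.

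Second, for a fixed canonical kernel $\phi$ of order $k$ and $1\ls r\ls k$, I would establish
\begin{equation*}
  \EE \Bigl\lvert\sum_{I_N^k}\phi(X_{i_1},\ldots,X_{i_k})\Bigr\rvert^p \simeq_{k,p}  \EE \Bigl\lvert\sum_{I_N^k}\eps_{i_1}\cdots\eps_{i_r}\phi(X_{i_1},\ldots,X_{i_k})\Bigr\rvert^p.
\end{equation*}
The upper bound is a standard symmetrization: introduce an independent copy $(X_i')$ of $(X_i)$, use $\EE\phi=0$ together with Jensen to bound $\|\sum\phi\|_p$ by $\|\sum(\phi-\phi')\|_p$, expand this difference telescopically to isolate one slot at a time, and then use the conditional symmetry $X_{i_j}\leftrightarrow X_{i_j}'$ to introduce independent Rademacher signs on any prescribed set of $r$ indices.

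The main obstacle is the reverse inequality, that is, removing the Rademacher signs to recover the original U-statistic; this requires canonicity in an essential way. The cleanest route, following Chapter~3 of de la Pe\~{n}a and Gin\'{e}, is to pass to the decoupled sum $\sum_{I_N^k}\phi(X_{i_1}^{(1)},\ldots,X_{i_k}^{(k)})$ built from $k$ independent copies of $(X_i)$, which is equivalent in $L^p$ to the original sum by the decoupling inequality for canonical kernels; in the decoupled regime the $r$ signs can be absorbed by iterated conditioning on the signed coordinates (using that the conditional expectation of $\phi$ in the remaining slots vanishes by canonicity) and an application of Khintchine--Kahane; one then reverses the decoupling, again valid because the kernel is canonical. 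Summing the resulting two-sided estimates over the Hoeffding levels $k=r,\ldots,m$ yields the claimed equivalence for $f$.
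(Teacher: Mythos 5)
The paper does not actually prove Theorem \ref{thm:randomization}; it is invoked as a special case of \cite[Theorem 3.5.3]{dlPG} and used as a black box. So there is no internal proof to compare against. Your sketch is a reasonable reconstruction of the general de la Pe\~{n}a--Gin\'{e} strategy (decoupling for canonical kernels, Khintchine--Kahane, then reversal), but as written it has two concrete gaps.

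First, the ``recombine the estimates via the triangle inequality in $L^p$'' step only produces one direction of the equivalence. If one proves $\lVert T_k\rVert_p \simeq \lVert \tilde T_k\rVert_p$ for each Hoeffding level $k$ and then wants to pass to the full sums $\sum_k c_k T_k$ and $\sum_k c_k \tilde T_k$, the triangle inequality gives an upper bound on each side by the sum of levels, but the reverse requires controlling cancellation across levels. For this one needs an additional mechanism -- typically conditional-expectation projections (which isolate level $r$, then level $r+1$ after subtracting, etc.) or the $L^2$ orthogonality plus hypercontractivity when the kernel is bounded -- none of which appears in the outline. This is precisely the part of the argument that makes the dlPG proof delicate.

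Second, the claim that ``the same regrouping applies when the Rademacher weights are inserted'' is not right as stated. The product $\eps_{i_1}\cdots\eps_{i_r}$ is attached to the first $r$ slots of the $m$-tuple. After symmetrizing and regrouping by the underlying $m$-subset, this becomes a sum of $\eps_I$ over all $r$-subsets $I$ of the chosen $m$-subset; plugging in the Hoeffding expansion then produces, at each level $k$, sums of terms $\eps_I\,(\pi_k f)(X_A)$ where $I$ and $A$ are generally distinct (possibly overlapping or disjoint) subsets. This is not the signed canonical U-statistic $\sum \eps_{j_1}\cdots\eps_{j_r}\,(\pi_k f)(X_{j_1},\ldots,X_{j_k})$ you assert it reduces to. Handling this mismatch (again, via decoupling and iterated conditioning) is part of the substance of dlPG's proof and would need to be spelled out.

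Neither gap is fatal to the strategy -- they are exactly the points that the cited Theorem 3.5.3 addresses -- but your sketch treats them as bookkeeping when they are the core of the argument.
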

Here $A\simeq_{m,p}B$ means $C^{\prime}_{m,p} A \ls B \ls
C^{\prime\prime}_{m,p} A$, where $C^{\prime}_{m,p}$ and
$C^{\prime\prime}_{m,p}$ are constants that depend only on $m$ and
$p$.

\begin{corollary} 
  \label{cor:randomization}
  Let $\mu$ be probability measure on $\R^n$, absolutely continuous
  with respect to Lebesgue measure.  Suppose that $X_1,\ldots,X_N$ are
  i.i.d. random vectors distributed according to $\mu$.  Let $p\gr 2$
  and suppose $\EE \abs{\det{[X_1\cdots X_n]}}^p<\infty$. Define
  $f:(\R^n)^n \rightarrow \R$ by
\begin{equation*}
  f(x_1,\ldots,x_n) = \abs{\det{[x_1\cdots x_n]}} - \EE \abs{\det{
    [X_1\cdots X_n]}}.
\end{equation*}Then 
\begin{eqnarray*}
\EE \bigl\lvert \sum_{1\ls i_1<\ldots<i_n\ls N}
f(X_{i_1},\ldots,X_{i_n}) \bigr\rvert^p 
\ls C_{n,p} N^{p(n -\frac{1}{2})} \EE\abs{f(X_1,\ldots,X_n)}^p,
 \end{eqnarray*}
where $C_{n,p}$ is a constant that depends on $n$ and $p$.
\end{corollary}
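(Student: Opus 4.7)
The plan is to convert the ordered U-statistic sum into a Rademacher-randomized sum via Theorem~\ref{thm:randomization}, and then complete the estimate with Khintchine's inequality together with the i.i.d.\ structure of the $X_i$'s. Since $h(x_1,\ldots,x_n) = \abs{\det{[x_1\cdots x_n]}}$ is permutation-invariant (the sign picked up by permuting columns is absorbed by the absolute value), so is $f$, and
\[
\sum_{1\ls i_1<\cdots<i_n\ls N} f(X_{i_1},\ldots,X_{i_n}) \;=\; \frac{1}{n!}\sum_{(i_1,\ldots,i_n)\in I_N^n} f(X_{i_1},\ldots,X_{i_n});
\]
it thus suffices to bound the $I_N^n$-sum. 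Next, I identify the degeneracy order of $h$. Since $\mu$ is absolutely continuous, the function $g(x_1):=\EE[h(x_1,X_2,\ldots,X_n)]$ vanishes at $0$ but is strictly positive at every $x_1\neq 0$, because $X_2,\ldots,X_n$ are a.s.\ linearly independent and $x_1\notin\mathop{\rm span}\{X_2,\ldots,X_n\}$; in particular $g$ is non-constant, so $h$ is non-degenerate (degenerate of order $0$).

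Applying Theorem~\ref{thm:randomization} with $m=n$ and $r=1$ yields
\[
\EE\Bigl\lvert \sum_{(i_1,\ldots,i_n)\in I_N^n} f(X_{i_1},\ldots,X_{i_n}) \Bigr\rvert^p \;\ls\; C_{n,p}\,\EE\Bigl\lvert\sum_{i=1}^N \eps_i Y_i\Bigr\rvert^p,
\]
where, after grouping by the first index, $Y_i := \sum_{(i_2,\ldots,i_n):\,(i,i_2,\ldots,i_n)\in I_N^n} f(X_i,X_{i_2},\ldots,X_{i_n})$ and $\eps_1,\ldots,\eps_N$ are i.i.d.\ Rademacher signs independent of the $X_j$'s. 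Conditioning on $(X_1,\ldots,X_N)$ and applying Khintchine's inequality gives $\EE_{\eps}\bigl\lvert\sum_i \eps_i Y_i\bigr\rvert^p \ls C_p \bigl(\sum_i Y_i^2\bigr)^{p/2}$, and since $p\gr 2$ the power-mean inequality yields $\bigl(\sum_i Y_i^2\bigr)^{p/2} \ls N^{p/2-1}\sum_i \abs{Y_i}^p$. Taking expectation over the $X$'s and using that the $Y_i$'s are identically distributed reduces matters to bounding $\EE\abs{Y_1}^p$.

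Since $Y_1$ is a sum of at most $N^{n-1}$ terms, a second power-mean bound gives $\abs{Y_1}^p \ls N^{(n-1)(p-1)} \sum \abs{f(X_1,X_{i_2},\ldots,X_{i_n})}^p$, and by exchangeability each summand has the same distribution as $f(X_1,\ldots,X_n)$, so $\EE\abs{Y_1}^p \ls N^{p(n-1)}\EE\abs{f(X_1,\ldots,X_n)}^p$. Multiplying the two estimates produces $N^{p/2}\cdot N^{p(n-1)} = N^{p(n-1/2)}$, and the $1/n!$ from the ordered-to-unordered conversion is absorbed into $C_{n,p}$.

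The main obstacle is the clean verification of non-degeneracy, which is the only place the absolute continuity of $\mu$ enters and is what fixes the randomization order at $r=1$; the rest amounts to careful bookkeeping of the powers of $N$ across the Khintchine step and the two power-mean inequalities. In particular, the crude $\ell^p$ triangle bound on $\abs{Y_1}^p$ may look wasteful on its own, but combined with the $N^{p/2}$ factor supplied by Khintchine it produces exactly the target exponent $p(n-1/2)$.
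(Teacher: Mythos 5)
Your argument is correct and follows essentially the same route as the paper: appeal to absolute continuity for non-degeneracy (giving $r=1$), apply the randomization theorem, condition on the $X_i$'s and use Khintchine, then finish with crude power-mean/H\"older bounds. The only (inconsequential) variation is the order of the two power-mean steps --- you pass from $\ell^2$ to $\ell^p$ on the outer sum over $i_1$ and then expand $Y_1$ by the triangle inequality, whereas the paper applies Cauchy--Schwarz on the inner sum over $(i_2,\ldots,i_n)$ first and then H\"older on the full sum over $I_N^n$ --- and both bookkeepings land on the same exponent $p(n-\tfrac12)$.
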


\begin{proof}
  Since $\mu$ is absolutely continuous, $\dimm{\mathop{\rm
      span}\{X_1,\ldots, X_k\}}=k$ a.s. for $k=1,\ldots, n$. Moreover,
  $ f(ax_1,\ldots, x_n) = \abs{a}f(x_1,\ldots,x_n)$ for any $a\in \R$,
  hence $f$ is non-degenerate (cf. (\ref{eqn:det_formula})). Thus we
  may apply Theorem \ref{thm:randomization} with $r=1$:
\begin{eqnarray*}
  \EE \Bigl\lvert \sum_{1\ls i_1<\ldots<i_n\ls N} n!
  f(X_{i_1},\ldots,X_{i_n}) \Bigr\rvert^p &= &\EE \Bigl\lvert 
    \sum_{(i_1,\ldots,i_n)\in I_N^n} f(X_{i_1},\ldots,X_{i_n})
    \Bigr\rvert^p\\ & \ls & C_{n,p} \EE \Bigl\lvert
    \sum_{(i_1,\ldots,i_n)\in I_N^n} \eps_{i_1}
    f(X_{i_1},\ldots,X_{i_n}) \Bigr\rvert^p.
\end{eqnarray*}
Suppose now that $X_1,\ldots,X_N$ are fixed. Taking expectation in
${\bf \eps }= (\eps_1,\ldots,\eps_N)$ and appling Khintchine's
inequality and then H\"{o}lder's inequality twice, we have
\begin{eqnarray*}
  \lefteqn{\EE_{\bf \eps} \Bigl\lvert \sum_{(i_1,\ldots,i_n)\in I_N^n}
  \eps_{i_1} f(X_{i_1},\ldots,X_{i_n}) \Bigr\rvert^p}\\
  & = & \EE_{\bf \eps}\Bigl\lvert
  \sum_{i_1=1}^N \eps_{i_1}
  \sum_{\substack{(i_2,\ldots,i_n)\\(i_1,\ldots,i_n)\in I_N^n}}
  f(X_{i_1},\ldots,X_{i_n})\Bigr\rvert^p\\ & \ls & C\Bigl\lvert
  \sum_{i_1=1}^N \Bigl(
  \sum_{\substack{(i_2,\ldots,i_n)\\(i_1,\ldots,i_n)\in I_N^n}}
  f(X_{i_1},\ldots,X_{i_n})\Bigr)^2\Bigr\rvert^{\frac{p}{2}}\\ & \ls &
  C\left({N-1 \choose n-1} (n-1)!\right)^{\frac{p}{2}}
  \Bigl\lvert \sum_{(i_1,\ldots,i_n)\in
    I_N^n} f(X_{i_1},\ldots,X_{i_n})^2\Bigr\rvert^{\frac{p}{2}} \\ &
  \ls & C\left({N-1 \choose n-1} (n-1)!\right)^{\frac{p}{2}} \left({N
    \choose n }n!\right)^{\frac{p-2}{2}} \sum_{(i_1,\ldots,i_n)\in
    I_N^n} \abs{f(X_{i_1},\ldots,X_{i_n})}^p,
\end{eqnarray*}
where $C$ is an absolute constant. Taking expectation in the $X_i$'s
gives
\begin{eqnarray*}
  \lefteqn{\EE \Bigl\lvert \sum_{(i_1,\ldots,i_n)\in I_N^n} \eps_{i_1}
    f(X_{i_1},\ldots,X_{i_n}) \Bigr\rvert^p}\\ & \ls & \left({N-1
    \choose n-1} (n-1)!\right)^{\frac{p}{2}} \left({N \choose n
  }n!\right)^{\frac{p-2}{2}} {N\choose n}n! \EE
  \abs{f(X_{1},\ldots,X_{n})}^p. 
\end{eqnarray*}
The proposition follows as stated by using the estimate ${N\choose
  n}\ls (eN/n)^n$.
\end{proof}

\section{Proof of Theorem \ref{thm:CLTcube}}
\label{section:proof}

As explained in the introduction, our first step is identity
(\ref{eqn:splitting}), the proof of which is included for
completeness.

\begin{proposition}
  Let $N\gr n$ and let $G$ be an $n\times N$ random matrix with
  i.i.d. standard Gaussian entries.  Let $C\subset \R^N$ be a convex
  body.  Then
  \begin{equation}
    \label{eqn:splitting_prop}
    \vol{GC} = \det{(GG^*)}^{\frac{1}{2}} \vol{P_E C},
  \end{equation}
  where $E = \mathop{\rm Range(G^*)}$. Moreover, $E$ is distributed
  uniformly on  $G_{N,n}$ and $\det{(GG^*)}^{\frac{1}{2}}$ and $\vol{P_E C}$
  are independent.
\end{proposition}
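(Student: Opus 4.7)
The plan is to split the statement into three pieces: the volume identity (\ref{eqn:splitting_prop}), the uniformity of $E$ on $G_{N,n}$, and the independence of $\det{(GG^*)}^{1/2}$ from $\vol{P_{E}C}$. The first is a Jacobian calculation; the other two follow from the $O(N)$-invariance of the law of the Gaussian matrix $G$.

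For the volume identity, I would start with the observation that $\ker G = E^{\perp}$ (since $\ker G = \mathop{\rm Range}(G^*)^{\perp}$), so $Gx=GP_Ex$ for every $x\in \R^N$ and thus $GC=G(P_EC)$. The restriction $G|_E:E\to \R^n$ is almost surely a bijection between two $n$-dimensional Euclidean spaces, so the change-of-variables formula gives $\vol{G(P_EC)}=J\cdot\vol{P_EC}$ for the corresponding Jacobian $J\gr 0$. To identify $J$, pick any orthonormal basis $v_1,\ldots,v_n$ of $E$, collect them in the $N\times n$ matrix $V=[v_1\cdots v_n]$ (so $V^*V=I_n$ and $VV^*=P_E$), and observe that $G|_E$ is represented in this basis by the $n\times n$ matrix $GV$, whence $J=\abs{\det{(GV)}}$. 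Using $GP_E=G$, I would then compute
\begin{equation*}
\abs{\det{(GV)}}^2=\det{((GV)(GV)^*)}=\det{(GP_EG^*)}=\det{(GG^*)},
\end{equation*}
which gives $J=\det{(GG^*)}^{1/2}$ and settles (\ref{eqn:splitting_prop}); as a byproduct this also shows that the Jacobian does not depend on the chosen basis of $E$.

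The distributional claims both follow from the fact that, for every deterministic $U\in O(N)$, one has $GU\overset{d}{=}G$. Right multiplication by $U$ sends $E=\mathop{\rm Range}(G^*)$ to $U^*E=\mathop{\rm Range}((GU)^*)$ while leaving $\det{((GU)(GU)^*)}=\det{(GG^*)}$ unchanged, so the joint law of $(\det{(GG^*)}^{1/2},E)$ equals that of $(\det{(GG^*)}^{1/2},U^*E)$ for every $U\in O(N)$. Reading off the second coordinate gives $U^*E\overset{d}{=}E$, so $E$ is rotationally invariant and hence uniform on $G_{N,n}$ by uniqueness of Haar measure. Reading the full joint law, the conditional distribution of $E$ given $\det{(GG^*)}^{1/2}$ is also rotationally invariant, hence uniform, and in particular does not depend on the conditioning variable; this yields independence of $E$ and $\det{(GG^*)}^{1/2}$, from which the required independence of $\vol{P_EC}$ follows since $\vol{P_EC}$ is a measurable function of $E$. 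The only subtle step is the Jacobian identification, and once one observes that $(GV)(GV)^*=GP_EG^*=GG^*$, everything else is a direct application of rotational invariance.
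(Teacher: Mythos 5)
Your proof is correct, and it takes a genuinely different route from the paper's. For the volume identity the paper simply cites the polar decomposition (referring to an earlier paper of Paouris--Pivovarov), whereas you give a self-contained Jacobian computation: observing $\ker G = E^{\perp}$ so $GC = G(P_EC)$, representing $G|_E$ by $GV$ for an orthonormal basis matrix $V$ of $E$, and computing $(GV)(GV)^* = GP_EG^* = GG^*$. This is a nice, elementary argument that the paper omits. For the independence, the paper's approach is to introduce an auxiliary Haar-distributed $U\in O(N)$ independent of $G$, replace $G$ by $GU$, and then show directly that the joint cumulative distribution function of $\bigl(\det{(GG^*)}^{1/2}, \vol{P_EC}\bigr)$ factors as a product, with the factor $\nu_{N,n}\left(E: \vol{P_EC}\ls t\right)$ emerging after integrating out $U$. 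You instead argue at the level of conditional distributions: the joint law of $(\det{(GG^*)}^{1/2}, E)$ is invariant under $E\mapsto U^*E$ for every fixed $U$, so the conditional law of $E$ given the determinant is rotationally invariant, hence equal to $\nu_{N,n}$ and thus constant. Both routes work; the paper's has the advantage of producing a single clean identity with no null-set bookkeeping, while yours has the (minor) technicality that, for fixed $U$, the invariance $\mu_s = (U^*)_{\#}\mu_s$ of the conditional law $\mu_s$ holds only for almost every $s$, with the null set a priori depending on $U$ and the test set; one patches this in the standard way by passing to a countable dense subgroup of $O(N)$ and a countable generating family of Borel sets, or, equivalently, by averaging over Haar-distributed $U$ as the paper does. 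Your approach also has the virtue of directly establishing the uniformity of $E$ on $G_{N,n}$, a point the paper's proof treats as known.
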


\begin{proof}
  Identity (\ref{eqn:splitting_prop}) follows from polar
  decomposition; see, e.g., \cite[Theorem 2.1(iii)]{PaoPiv_small}.  To
  prove that the two factors are independent, we note that if $U$ is
  an orthogonal transformation, we have $\mathop{\rm det}(GG^*)^{1/2}
  = \mathop{\rm det}((GU)(GU)^*)^{1/2}$; moreover, $G$ and $GU$ have
  the same distribution.  Thus if $U$ is a random orthogonal
  transformation distributed according to the Haar measure, we have
  for $s, t\gr 0$,
  \begin{eqnarray*}
    \lefteqn{\mathbb{P}_{\otimes \gamma_n}\left(\mathop{\rm det}(GG^*)^{1/2}\ls
    s, \abs{P_{\mathop{\rm Range}(G^*)}C}\ls t\right)}\\
    & & = \mathbb{P}_{\otimes \gamma_n}\otimes
    \mathbb{P}_{U}\left(\mathop{\rm det}(GG^*)^{1/2}\ls s,
    \abs{P_{\mathop{\rm Range}(U^*G^*)}C}\ls t\right) \\ &  & =\EE_{\otimes
      \gamma_n}\left(\mathds{1}_{\{\mathop{\rm det}(GG^*)^{1/2}\ls s\}}
      \EE_U \mathds{1}_{\{\abs{P_{U^* \mathop{\rm Range}(G^*)} C}\ls t\}}\right) 
       \\ &  & =  \mathbb{P}_{\otimes\gamma_n}\left(\mathop{\rm
        det}(GG^*)^{1/2}\ls s\right)\nu_{N,n}\left(E\in
      G_{N,n}:\abs{P_{E}C}\ls t\right).
  \end{eqnarray*}
\end{proof}

Taking $C=B_{\infty}^N$ in (\ref{eqn:splitting_prop}), we set
\begin{equation}
  \label{eqn:XNdef}
  X_N=\vol{GB_{\infty}^N} = 2^n \sum_{1\ls i_1<\ldots<i_n\ls
    N}\abs{\det{[g_{i_1}\cdots g_{i_n}]}}
\end{equation} (cf. (\ref{eqn:zonotope_volformula})),
\begin{equation}
  \label{eqn:YNdef}
  Y_N = \det{(GG^*)}^{\frac{1}{2}} = \left(\sum_{1\ls i_1<\ldots < i_n
    \ls N} \det{[g_{i_1}\cdots g_{i_m}]}^2\right)^{\frac{1}{2}}
\end{equation} (cf. (\ref{eqn:CauchyBinet})), and 
\begin{equation}
  \label{eqn:ZNdef}
  Z_N = \vol{P_E B_{\infty}^N},
\end{equation}
where $E$ is distributed according to $\nu_{N,n}$ on $G_{N,n}$. Then
$X_N = Y_N Z_N$, where $Y_N$ and $Z_N$ are independent.  In order to
prove Theorem \ref{thm:CLTcube}, we start with several properties of
$X_N$ and $Y_N$.

\begin{proposition}
  \label{prop:XN} Let $X_N$ be as defined in (\ref{eqn:XNdef}).
  \begin{itemize}
  \item[(1)] For each $p\gr 2$,
    \begin{equation*}
      \EE\abs{X_N-\EE X_N}^p\ls C_{n,p} N^{p(n-\frac{1}{2})}.
    \end{equation*} 
  \item[(2)] The variance of $X_N$ satisfies
    \begin{equation*}
      \frac{\var{X_N}}{N^{2n-1}} \rightarrow c_n \text{ as }N\rightarrow \infty,
    \end{equation*}where $c_n$ is a positive constant that depends only on $n$.
  \item[(3)] $X_N$ is asymptotically normal; i.e.,
    \begin{equation*}
      \frac{X_N-\EE X_N}{\sqrt{\var{X_N}}} \overset{d}{\rightarrow}
      \mathcal{N}(0,1) \text{ as } N\rightarrow \infty.
    \end{equation*}
  \end{itemize}
\end{proposition}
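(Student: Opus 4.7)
The plan is to view $X_N/(2^n\binom{N}{n})$ as the symmetric U-statistic $U_N$ with kernel $h(x_1,\ldots,x_n)=\abs{\det{[x_1\cdots x_n]}}$ on i.i.d. standard Gaussian inputs $g_1,\ldots,g_N$, and then apply the tools developed in Section 3 in turn. For (1), Corollary \ref{cor:randomization} applies with $X_i=g_i$, $\mu=\gamma_n$, and centered kernel $f(x_1,\ldots,x_n)=\abs{\det{[x_1\cdots x_n]}}-\EE\abs{\det{[g_1\cdots g_n]}}$; its hypotheses are met because $\gamma_n$ is absolutely continuous with finite moments of all orders and $f$ is non-degenerate via homogeneity, exactly as noted in the proof of the corollary. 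Writing $X_N-\EE X_N = 2^n\sum_{1\ls i_1<\cdots<i_n\ls N} f(g_{i_1},\ldots,g_{i_n})$, the corollary yields $\EE\abs{X_N-\EE X_N}^p \ls 2^{np}C_{n,p}N^{p(n-\frac{1}{2})}\EE\abs{f(g_1,\ldots,g_n)}^p$, and the last factor is finite and depends only on $n$ and $p$.

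For (2), I would apply Theorem \ref{thm:Ustat_basics}(1) to $U_N$, obtaining $\var{U_N}=n^2\zeta'/N+O(N^{-2})$ with $\zeta'=\var{\EE[\abs{\det{[g_1\cdots g_n]}}|g_1]}$. The crucial point is that $\zeta'>0$: using the Gram-Schmidt identity (\ref{eqn:det_formula}) together with the rotational invariance of $\gamma_n$ — so that each conditional expectation $\EE[\norm{P_{F_{k}^\perp} g_{k+1}}_2|g_1,\ldots,g_k]$ depends only on $\dim F_k=k$ — one finds $\EE[\abs{\det{[g_1\cdots g_n]}}|g_1]=c(n)\norm{g_1}_2$ for some positive constant $c(n)$. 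Since $\norm{g_1}_2$ is chi-distributed and hence has strictly positive variance, $\zeta'>0$. Multiplying through by $(2^n\binom{N}{n})^2$ and using $\binom{N}{n}\sim N^n/n!$ then gives $\var{X_N}/N^{2n-1}\to c_n:=4^n n^2\zeta'/(n!)^2>0$.

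For (3), I would invoke Theorem \ref{thm:Ustat_basics}(3) applied to $U_N$ (or equivalently Corollary \ref{cor:ri_zonotopes} with $R=\norm{g_1}_2$); the moment and positivity hypotheses are exactly what (2) established. Either route gives $\sqrt{N}(X_N-\EE X_N)/(2^n\binom{N}{n} n\sqrt{\zeta'})\overset{d}{\to}\mathcal{N}(0,1)$. By (2) this denominator is asymptotically equivalent to $\sqrt{\var{X_N}}$ (both behave like $\sqrt{c_n}\,N^{n-\frac{1}{2}}$), so Slutsky's theorem (Theorem \ref{thm:Slutsky}) converts to the canonical normalization and delivers (3). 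I expect the main obstacle to be concentrated in (2): the $O(1/N)$ asymptotic of $\var{U_N}$ is textbook, but identifying the conditional expectation as a positive multiple of $\norm{g_1}_2$ via rotational invariance — and thus $\zeta'>0$ — is the key structural step; once it is in hand, (1) and (3) reduce to direct applications of the randomization inequality and Slutsky, respectively.
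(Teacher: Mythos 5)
Your proof is correct, and parts (1) and (3) follow the paper's route. Part (2), however, is handled by a genuinely different argument. The paper's proof of (2) does \emph{not} invoke Hoeffding's variance formula (Theorem \ref{thm:Ustat_basics}(1)) at all. Instead, it first establishes the CLT with conditional normalization, $\sqrt{N}(X_N-\EE X_N)/(\binom{N}{n}n\sqrt{\zeta_1})\overset{d}{\rightarrow}\mathcal{N}(0,1)$, via Corollary \ref{cor:ri_zonotopes}, then uses the $L^4$ bound from part (1) to show that $(X_N-\EE X_N)/N^{n-\frac{1}{2}}$ has uniformly integrable squares, and concludes that the second moments converge to the limiting variance, yielding $\sqrt{\var{X_N}}/(N^{-\frac{1}{2}}\binom{N}{n}n\sqrt{\zeta_1})\rightarrow 1$. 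Your route applies Theorem \ref{thm:Ustat_basics}(1) directly to $U_N=X_N/(2^n\binom{N}{n})$ to read off $\var{U_N}=n^2\zeta'/N+O(N^{-2})$, and then verifies $\zeta'>0$ by computing $\EE[\abs{\det[g_1\cdots g_n]}\mid g_1]=c(n)\norm{g_1}_2$ via the Gram--Schmidt identity and rotational invariance. Both routes give the same constant $c_n=4^n n^2\zeta'/(n!)^2$ (since $\zeta_1 = 4^n\zeta'$), and your positivity argument is exactly the calculation hidden inside the proof of Corollary \ref{cor:ri_zonotopes}. Your approach is more direct and makes (2) independent of (1), whereas the paper's uniform-integrability device is deployed again in the proof of Theorem \ref{thm:CLTcube} for $Z_N$ — which is not a U-statistic and so cannot be handled by Hoeffding's formula — so the authors likely chose it for uniformity of technique. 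Either way, your argument is sound and self-contained.
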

  
\begin{proof}
  Statement (1) follows from Corollary \ref{cor:randomization}.  To
  prove (2), let $g$ be a random vector distributed according to
  $\gamma_n$. Then Corollary \ref{cor:ri_zonotopes} with $\zeta_1
  =4^n\var{\norm{g}_2}\EE^{2(n-1)}\norm{g}_2\EE^2 D_n $ yields
  \begin{equation}
    \label{eqn:XN_intermediate}
    \sqrt{N}\left(\frac{X_N- \EE X_N}{{N\choose
        n}n\sqrt{\zeta_1}}\right)
    \overset{d}{\rightarrow}\mathcal{N}(0,1) \text{ as }N\rightarrow
    \infty.
  \end{equation}
  On the other hand, by part (1) we have
  \begin{equation*}
    \frac{\EE \abs{X_N-\EE X_N}^4}{N^{4n-2}}\ls C_{n,p}.
  \end{equation*}
  This implies that the sequence $(X_N-\EE X_N)/N^{n-\frac{1}{2}}$ is uniformly integrable, hence
  \begin{equation*}
    \frac{\sqrt{\var{X_N}}}{N^{-\frac{1}{2}}{N\choose
        n}n\sqrt{\zeta_1}} \rightarrow 1 \text{ as }N\rightarrow
    \infty.
    \end{equation*}
  Part (3) now follows from (\ref{eqn:XN_intermediate}) and Slutsky's
  theorem.
\end{proof}

We now turn to $Y_N=\det{(GG^*)}^{\frac{1}{2}}$.  It is well-known
that
\begin{equation}
  \label{eqn:YNfactor}
  Y_N = \chi_N\chi_{N-1}\cdot\ldots\cdot \chi_{N-n+1},
\end{equation}
where $\chi_k=\sqrt{\chi_k^2}$ and the $\chi_k^2$'s are independent
chi-squared random variables with $k$ degrees of freedom, $k=N,
\ldots,N-n+1$ (see, e.g., \cite[Chapter 7]{Anderson}).  Consequently,
\begin{equation*}
  \EE Y_{N}^2 =\frac{N!}{(N-n)!} = N^n\left(1-\frac{1}{N}\right)
  \cdots\left(1-\frac{n-1}{N}\right).
\end{equation*} 
Additionally, we will use the following basic properties of $Y_N$.

\begin{proposition}
  \label{prop:YN}
  Let $Y_N$ be as defined in (\ref{eqn:YNdef}).
  \begin{itemize}
    \item[(1)] For each $p\gr 2$,
      \begin{equation*}
        \EE \abs{Y_N^2 - \EE Y_N^2}^p \ls C_{n,p} N^{p(n-\frac{1}{2})}.
      \end{equation*}
    \item[(2)]  The variance of $Y_N$ satisfies 
      \begin{equation*}
        \frac{\var{Y_{N}}}{N^{n-1}} \rightarrow \frac{n}{2} \text{ as }
        N\rightarrow \infty.
      \end{equation*}
      \item[(3)]  $Y_N^2$ is asymptotically normal; i.e.,
        \begin{equation*}
          \sqrt{N}\left(\frac{Y_N^2}{N^n}-1\right)
          \overset{d}{\rightarrow} \mathcal{N}(0,2n) \text{ as }
          N\rightarrow \infty.
        \end{equation*}
  \end{itemize}
\end{proposition}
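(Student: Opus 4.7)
The strategy exploits the exact factorization $Y_N = \chi_N\chi_{N-1}\cdots\chi_{N-n+1}$ from (\ref{eqn:YNfactor}) into $n$ independent factors: since $n$ is fixed and each $\chi_k^2$ is chi-squared with $k\in\{N-n+1,\ldots,N\}$ degrees of freedom, moment and distributional computations for $Y_N^2=\prod_k\chi_k^2$ reduce to standard facts about independent chi-squareds, each behaving essentially like $k\sim N$.

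For part (1), I would use the telescoping identity
\begin{equation*}
\prod_{k=N-n+1}^{N}\chi_k^2 - \prod_{k=N-n+1}^{N} k = \sum_{j=N-n+1}^{N}(\chi_j^2-j)\Bigl(\prod_{i<j}\chi_i^2\Bigr)\Bigl(\prod_{i>j} i\Bigr),
\end{equation*}
then apply Minkowski's inequality, the independence of $\chi_j^2-j$ from $\prod_{i<j}\chi_i^2$, and the standard bounds $\|\chi_j^2-j\|_p \ls C_p\sqrt{j}$ (Rosenthal/Marcinkiewicz--Zygmund, for the sum of $j$ i.i.d.\ mean-zero variables $g_i^2-1$) together with $\|\chi_k^2\|_p \ls C_p k$. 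Each of the $n$ summands then has $L^p$-norm $O(N^{n-1/2})$, yielding (1). For part (2), independence gives $\EE Y_N^2 = \prod_k k = (N)_n$, and the Stirling expansion $\Gamma((k+1)/2)/\Gamma(k/2) = \sqrt{k/2}(1 - \tfrac{1}{4k} + O(k^{-2}))$ gives $(\EE\chi_k)^2 = k - \tfrac12 + O(1/k)$, so $(\EE Y_N)^2 = (N)_n(1 - n/(2N) + O(N^{-2}))$. Subtracting yields $\var{Y_N} = (n/2)N^{n-1} + O(N^{n-2})$, which is (2). The delicate point in (2) is that the leading $N^n$ terms in $\EE Y_N^2$ and $(\EE Y_N)^2$ cancel exactly, so the limit $n/2$ is produced entirely by the $-\tfrac12$ correction in $(\EE\chi_k)^2$ summed over $n$ factors; this is the main obstacle to work out cleanly.

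For part (3), I would write $\chi_k^2 = k + \sqrt{k}\,\eta_k$ where $\eta_k = (\chi_k^2-k)/\sqrt{k}$ is centered with variance $2$ and, by the classical CLT (as $\chi_k^2$ is a sum of $k$ i.i.d.\ centered squared Gaussians of variance $2$), satisfies $\eta_k\overset{d}{\rightarrow}\mathcal{N}(0,2)$ as $k\to\infty$. Then
\begin{equation*}
\frac{Y_N^2}{N^n} = \prod_{k=N-n+1}^{N}\frac{k}{N}\cdot\prod_{k=N-n+1}^{N}\Bigl(1+\frac{\eta_k}{\sqrt{k}}\Bigr).
\end{equation*}
Since $\prod_k(k/N) = 1 - \binom{n}{2}/N + O(N^{-2})$ and the second product expands (using $\eta_k/\sqrt{k} = O_{\PP}(1/\sqrt{N})$ and $n$ fixed) as $1 + \sum_k\eta_k/\sqrt{k} + O_{\PP}(1/N)$, multiplying by $\sqrt{N}$ yields
\begin{equation*}
\sqrt{N}\Bigl(\frac{Y_N^2}{N^n}-1\Bigr) = \sum_{k=N-n+1}^N\sqrt{N/k}\,\eta_k + o_{\PP}(1).
\end{equation*}
By independence and pointwise convergence of characteristic functions, the joint law of $(\eta_{N-n+1},\ldots,\eta_N)$ converges to that of $n$ i.i.d.\ $\mathcal{N}(0,2)$ variables; combined with $\sqrt{N/k}\rightarrow 1$ and Slutsky's theorem, the right-hand side tends in distribution to $\mathcal{N}(0,2n)$, establishing (3).
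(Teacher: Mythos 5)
Your proposal is correct, but it takes a genuinely different route from the paper for all three parts. For part (1), the paper applies the randomization inequality for U-statistics (their Corollary~\ref{cor:randomization}, used with the squared-determinant kernel arising from Cauchy--Binet), whereas you exploit the chi-squared factorization~(\ref{eqn:YNfactor}) directly via a telescoping identity plus Minkowski, independence, and Rosenthal bounds. Your argument is more elementary and self-contained, and correctly gives the $N^{p(n-1/2)}$ rate because the single factor $\chi_j^2-j$ of size $\sqrt{N}$ is multiplied by $n-1$ factors each of size $N$; the paper's choice keeps the treatment of $Y_N^2$ parallel to that of $X_N$, which is important for their overall framework. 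For part (2), the paper proceeds by induction on $n$ using the decomposition $\var{Y_{N,n}} = \var{\chi_N}\,\EE Y_{N-1,n-1}^2 + \EE^2\chi_N\,\var{Y_{N-1,n-1}}$, whereas you compute $(\EE\chi_k)^2 = k - \tfrac12 + O(1/k)$ directly from the Gamma-ratio expansion and observe that the leading $N^n$ terms in $\EE Y_N^2$ and $(\EE Y_N)^2$ cancel, leaving $n/2 \cdot N^{n-1}$; both are valid, and yours makes the cancellation mechanism more transparent. For part (3), the paper simply cites Anderson, whereas you give a short explicit proof via the expansion $\chi_k^2 = k + \sqrt{k}\,\eta_k$, independence, the classical CLT for each $\eta_k$, and Slutsky. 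The only point to make explicit in your write-up is the joint convergence $(\eta_{N-n+1},\ldots,\eta_N)\overset{d}{\to}(\mathcal{N}(0,2))^{\otimes n}$, which holds because the coordinates are independent and each marginal converges, so the joint characteristic function factors and converges; you allude to this but it is worth stating plainly.
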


\begin{proof}
  To prove part (1), we apply Corollary \ref{cor:randomization} to $Y_N^2$.

  To prove part (2), we use (\ref{eqn:YNfactor}) and define $Y_{N,n}$ by
  $Y_{N,n}=Y_N = \chi_N\chi_{N-1}\cdot\ldots\cdot \chi_{N-n+1}$ and
  procede by induction on $n$.  Suppose first that $n=1$ so that
  $Y_{N,1} =\chi_N$.  By the concentration of Gaussian measure (e.g.,
  \cite[Remark 4.8]{Pisier}), there is an absolute constant $c_1$
  such that $\EE\abs{\chi_N - \EE\chi_N}^4<c_1$ for all $N$, which implies that
  the sequence $(\chi_N- \EE \chi_N)_N$ is uniformly integrable.  By
  the law of large numbers $\chi_N/\sqrt{N}\rightarrow 1$ a.s. and
  hence $\EE\chi_N /\sqrt{N}\rightarrow 1$, by uniform integrability.
  Note that
  \begin{eqnarray*}
    \chi_N - \EE \chi_N &= & \frac{\chi_N^2 - \EE^2 \chi_N}{\chi_N + \EE
      \chi_N} \\ & = & \frac{\sqrt{N}}{\chi_N +\EE \chi_N}
    \frac{\chi_N^2-N}{\sqrt{N}} +\frac{\sqrt{N}}{\chi_N+\EE
      \chi_N}\frac{N-\EE^2\chi_N}{\sqrt{N}}.
  \end{eqnarray*}
  By Slutsky's theorem and the classical central limit theorem,
  \begin{equation*}
  \frac{\sqrt{N}}{\chi_N +\EE \chi_N} \frac{\chi_N^2-N}{\sqrt{N}}
  \overset{d}{\rightarrow}\frac{1}{2}\mathcal{N}(0,2) \text{ as } N\rightarrow \infty, 
  \end{equation*}while
  \begin{equation*}
    \frac{\sqrt{N}}{\chi_N+\EE \chi_N}\frac{N-\EE^2\chi_N}{\sqrt{N}}
    \rightarrow 0\text{ (a.s.)} \text{ as } N\rightarrow \infty,
  \end{equation*}since $\var{\chi_N}=N-\EE^2 \chi_N <c_1^{1/2}$.
  Thus
  \begin{equation*}
    \chi_N - \EE \chi_N \overset{d}{\rightarrow}
    \frac{1}{2}\mathcal{N}(0,2) = \mathcal{N}(0,\frac{1}{2}) \text{ as
    } N\rightarrow \infty.
  \end{equation*}
  Appealing again to uniform integrability of  $(\chi_N-\EE\chi_N)_N$, we have
  \begin{equation*}
    \var{Y_{N,1}}= \EE\abs{\chi_N- \EE\chi_N}^2 \rightarrow
    \frac{1}{2} \text{ as } N\rightarrow \infty.
  \end{equation*}
  
  Assume now that 
  \begin{equation*}
    \frac{\var{Y_{N-1,n-1}}}{N^{n-2}} \rightarrow \frac{n-1}{2} \text{
      as } N\rightarrow \infty.
  \end{equation*}
  Note that
  \begin{eqnarray*}
    \var{Y_{N,n}} & = & \EE \chi_N^2 \EE Y_{N-1,n-1}^2 - 
    \EE^2 \chi_N \EE^2 Y_{N-1,n-1} \\
    & = & \EE(\chi_N^2 - \EE^2 \chi_N) \EE Y_{N-1,n-1}^2 +
    \EE^2 \chi_N (\EE Y_{N-1,n-1}^2 - \EE^2 Y_{N-1,n-1}) \\
    & = & \var{\chi_N} \EE Y_{N-1,n-1}^2+ \EE^2 \chi_N \var{Y_{N-1,n-1}}.
  \end{eqnarray*}
  We conclude the proof of part (2) with
  \begin{equation*}
    \frac{\var{\chi_N}\EE Y_{N-1,n-1}^2}{N^{n-1}} \rightarrow \frac{1}{2},
  \end{equation*} and, using the inductive hypothesis, 
  \begin{equation*}
    \frac{\EE^2 \chi_N\var{Y_{N-1,n-1}}}{N^{n-1}}\rightarrow \frac{n-1}{2}.
  \end{equation*}

  Lastly, statement (3) is well-known (see, e.g., \cite[\S
    7.5.3]{Anderson}).
\end{proof}

The next proposition is the key identity for $Z_N$.  To state it we
will use the following notation:
\begin{equation}
  \label{eqn:Gaussian_determinant}
  \Delta_{n,p}^p= \EE \abs{\mathop{\rm det}[g_1\cdots g_n]}^p.
\end{equation}
Explicit formulas for $\Delta_{n,p}^p$ are well-known and follow from
identity (\ref{eqn:det_formula}); see, e.g., \cite[pg 269]{Anderson}.
\begin{proposition}
  \label{prop:ZNexpansion}
  Let $X_N, Y_N$ and $Z_N$ be as above (cf. (\ref{eqn:XNdef}) -
  (\ref{eqn:ZNdef})).  Then
  \begin{equation}
    \label{eqn:ZNexpansion}
    \frac{Z_N-\EE Z_N}{N^{\frac{n-1}{2}}} = \alpha_{N,n} \frac{X_N-\EE
      X_N}{N^{n-\frac{1}{2}}} -\beta_{N,n}\frac{Y_N^2-\EE
      Y_N^2}{N^{n-\frac{1}{2}}} - \delta_{N,n},
  \end{equation}
  where 
  \begin{itemize}
  \item[(i)] $\alpha_{N,n} \overset{a.s.}{\rightarrow} 1$ as
    $N\rightarrow \infty$;
  \item[(ii)] $\beta_{N,n} \overset{a.s.}{\rightarrow}
    \beta_n=\frac{2^{n-1}\Delta_{n,1}}{\Delta_{n,2}^2}$ as $N\rightarrow
    \infty$;
  \item[(iii)]$\delta_{N,n}\overset{a.s.}{\rightarrow} 0$ as
    $N\rightarrow \infty$.
  \end{itemize}
  Moreover, for all $p\gr 1$,
  \begin{equation*}
    \sup_{N\gr n+4p-1} 
    \max(\EE \abs{\alpha_{N,n}}^p, \EE \abs{\beta_{N,n}}^p, \EE \abs{\delta_{N,n}}^p)
    \ls C_{n,p}.
  \end{equation*}
 \end{proposition}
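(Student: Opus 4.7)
The plan is to start from $Z_N = X_N/Y_N$ and perform a second-order Taylor expansion of $w \mapsto 1/\sqrt{w}$ about the deterministic point $w = b_N := \EE Y_N^{2}$, evaluated at $w = Y_N^{2}$. Rationalizing $1/\sqrt{w}-1/\sqrt{b_N}$ twice yields the exact identity
\begin{equation*}
\frac{1}{\sqrt{w}} = \frac{1}{\sqrt{b_N}} - \frac{w-b_N}{2\, b_N^{3/2}} + \rho(w,b_N), \quad \rho(w,b_N) = (w-b_N)^{2}\,\frac{\sqrt{w}+2\sqrt{b_N}}{2\, b_N^{3/2}\,\sqrt{w}\,(\sqrt{w}+\sqrt{b_N})^{2}}.
\end{equation*}
Multiplying by $X_N$, writing $X_N = a_N + (X_N-a_N)$ with $a_N := \EE X_N$, and subtracting expectations produces the claimed expansion with the \emph{deterministic} coefficients
\begin{equation*}
\alpha_{N,n} = \frac{N^{n/2}}{\sqrt{b_N}}, \qquad \beta_{N,n} = \frac{a_N\, N^{n/2}}{2\, b_N^{3/2}},
\end{equation*}
and a remainder $\delta_{N,n}$ that absorbs the recentered cross term $-(X_N-a_N)(Y_N^{2}-b_N)/(2 b_N^{3/2})$ and the recentered Taylor term $X_N\rho(Y_N^{2},b_N)$, each divided by $N^{(n-1)/2}$.

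Since $\alpha_{N,n}$ and $\beta_{N,n}$ are deterministic, their claimed a.s.\ convergence and $L^p$ bounds reduce to numerical statements. From $b_N = \prod_{k=0}^{n-1}(N-k)$ we have $b_N/N^{n} \to 1$ and hence $\alpha_{N,n}\to 1$. From (\ref{eqn:XNdef}) we get $a_N = 2^n\binom{N}{n}\Delta_{n,1}$, and the Cauchy--Binet formula (or equivalently (\ref{eqn:YNfactor}) at $N=n$) gives $\Delta_{n,2}^{2}=n!$; a short direct computation then yields $\beta_{N,n}\to 2^{n-1}\Delta_{n,1}/n! = 2^{n-1}\Delta_{n,1}/\Delta_{n,2}^{2} = \beta_n$.

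The substance of the proof is the control of $\delta_{N,n}$, which splits into two summands. For the cross term, Cauchy--Schwarz combined with Propositions~\ref{prop:XN}(1) and \ref{prop:YN}(1) at exponent $2p$ gives $\EE\abs{(X_N-a_N)(Y_N^{2}-b_N)}^{p}\ls C_{n,p}N^{2p(n-1/2)}$; dividing by $b_N^{3p/2}\sim N^{3np/2}$ and by $N^{p(n-1)/2}$ produces $O(N^{-p/2})$. For the Taylor term, the explicit form of $\rho$ shows that $\rho(Y_N^{2},b_N)$ is dominated, up to a universally bounded factor, by $(Y_N^{2}-b_N)^{2}/(b_N^{5/2}Y_N)$, so one needs to control $\EE[X_N^{p}\abs{Y_N^{2}-b_N}^{2p}Y_N^{-p}]$. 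Repeated Cauchy--Schwarz isolates (i) positive moments of $X_N$, handled via Proposition~\ref{prop:XN}(1) together with $\EE X_N = O(N^{n})$, (ii) moments of $Y_N^{2}-b_N$, handled via Proposition~\ref{prop:YN}(1), and (iii) negative moments $\EE Y_N^{-q}$. The last are finite precisely when $q<N-n+1$ since $Y_N=\prod_{j=0}^{n-1}\chi_{N-j}$ with $\EE\chi_k^{-q}<\infty$ iff $q<k$, and one readily checks $\EE Y_N^{-q}\ls C_{n,q}N^{-nq/2}$ for $N$ sufficiently large. This is exactly where the hypothesis $N\gr n+4p-1$ enters: it guarantees that the negative moments of $Y_N$ of the required order exist and behave polynomially in $N$. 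Assembling everything yields $\EE\abs{\delta_{N,n}}^{p}\ls C_{n,p}N^{-p/2}$, which gives the stated uniform moment bound and, by Borel--Cantelli applied with any $p>2$, the almost sure convergence $\delta_{N,n}\to 0$. The main technical obstacle is precisely the negative-moment control on $Y_N$ forced by the $1/Y_N$ factor in the Taylor remainder; without the lower threshold on $N$, the $L^p$-integrability of $\delta_{N,n}$ would fail.
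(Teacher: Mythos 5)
Your decomposition is genuinely different from the paper's.  The paper starts from
\begin{equation*}
  Z_N-\EE Z_N \;=\; \frac{X_N-\EE X_N}{Y_N}
  \;-\;\frac{(Y_N^2-\EE Y_N^2)\,\EE X_N}{Y_N(Y_N+\EE Y_N)\,\EE Y_N}
  \;-\;\frac{\var{Y_N}\,\EE X_N}{Y_N(Y_N+\EE Y_N)\,\EE Y_N},
\end{equation*}
obtained purely by rationalizing $1/Y_N-1/\EE Y_N$ once; the resulting
$\alpha_{N,n}=N^{n/2}/Y_N$, $\beta_{N,n}=N^{n/2}\EE X_N/\bigl(Y_N(Y_N+\EE Y_N)\EE Y_N\bigr)$
and $\delta_{N,n}=\beta_{N,n}\var{Y_N}/N^{n-1/2}$ are \emph{random}, and the moment bounds
all reduce to a single negative-moment estimate $\EE(N^{n/2}/Y_N)^q\ls C_{n,q}$ from
\cite[Lemma 4.2]{PaoPiv_small}.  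You instead Taylor-expand $w\mapsto w^{-1/2}$ about
$w=\EE Y_N^2$, which produces \emph{deterministic} $\alpha_{N,n}$, $\beta_{N,n}$, making
(i)--(ii) and their $L^p$ bounds trivial, at the cost of a more complicated $\delta_{N,n}$
that must absorb both the cross term $(X_N-\EE X_N)(Y_N^2-\EE Y_N^2)$ and a quadratic
Taylor remainder.  Both routes are sound; the paper's is shorter because it never has to
estimate a product of fluctuations, while yours isolates the deterministic skeleton more
cleanly.

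Two concrete points need repair in your write-up.  First, the stated bound
$\rho(w,b_N)\ls C\,(w-b_N)^2/(b_N^{5/2}\sqrt{w})$ is false (the ratio grows like
$\sqrt{b_N}$); the correct universal bound is
$\rho(w,b_N)\ls (w-b_N)^2/(b_N^{2}\sqrt{w})$, which is in fact what you implicitly use
when you write the target moment as $\EE\bigl[X_N^{p}\,\abs{Y_N^2-b_N}^{2p}\,Y_N^{-p}\bigr]/b_N^{2p}$.
Second, controlling that quantity by two applications of Cauchy--Schwarz requires
$\EE Y_N^{-4p}<\infty$, i.e.\ $4p<N-n+1$, which is $N\gr n+4p$ rather than the claimed
$N\gr n+4p-1$; either use a three-factor H\"older inequality (which brings the requirement
down to $\EE Y_N^{-3p}$) or, better, notice that the Taylor term does not require any
negative moment of $Y_N$ at all: since $X_N=Y_NZ_N$ and $Z_N=\abs{P_EB_\infty^N}\ls \abs{B_2^n}N^{n/2}$
deterministically, one has $\abs{X_N\rho}\ls \abs{B_2^n}N^{n/2}(Y_N^2-b_N)^2/b_N^2$,
after which Proposition~\ref{prop:YN}(1) alone yields the $O(N^{-p/2})$ bound with no
lower threshold on $N$ from this term.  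With those adjustments your argument closes.
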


The latter proposition is the first step in passing from the quotient
$Z_N=X_N/Y_N$ to the normalization required in Theorem
\ref{thm:CLTcube}.  The fact that $N^{n-\frac{1}{2}}$ appears in both
of the denominators on the right-hand side of (\ref{eqn:ZNexpansion})
indicates that both $X_N$ and $Y_N^2$ must be accounted for in order
to capture the asymptotic normality of $Z_N$.

\begin{proof}
  Write
\begin{eqnarray*}
  Z_N - \EE Z_N & = & \frac{X_N}{Y_N} - \frac{\EE X_N}{\EE Y_N} \\
  & = &\frac{X_N - \EE X_N}{Y_N} -\left(\frac{\EE X_N}{\EE Y_N}-
    \frac{\EE X_N}{Y_N}\right) \\ 
  & = & \frac{X_N - \EE X_N}{Y_N} - \frac{(Y_N^2 - \EE Y_N^2+\var{Y_N})\EE X_N}
  {Y_N(Y_N+\EE Y_N) \EE Y_N } \\
  & = & \frac{X_N - \EE X_N}{Y_N} - \frac{(Y_N^2 - \EE Y_N^2)\EE X_N}
  {Y_N (Y_N+\EE Y_N)\EE Y_N } - \frac{\var{Y_N}\EE X_N}
  {Y_N(Y_N+\EE Y_N) \EE Y_N }.
\end{eqnarray*}
Thus
\begin{eqnarray*}
  \frac{Z_N-\EE Z_N}{N^{\frac{n-1}{2}}}
  =  \alpha_{N,n}
  \left(\frac{X_N-\EE X_N}{N^{n-\frac{1}{2}}}\right)
  -\beta_{N,n}
  \left(\frac{Y_N^2-\EE Y_N^2}{N^{n-\frac{1}{2}}}\right)
  -\delta_{N,n},
\end{eqnarray*}
which shows that (\ref{eqn:ZNexpansion}) holds with
\begin{equation*}
\alpha_{N,n} = \frac{N^{\frac{n}{2}}}{Y_N}, \;\;
\beta_{N,n} = \frac{N^{\frac{n}{2}}\EE X_N}{Y_N(Y_N+\EE Y_N)\EE Y_N}, \;\;
{\delta_{N,n}} = \beta_{N,n}\frac{\var{Y_N}}{N^{n-\frac{1}{2}}}.
\end{equation*}
Using the factorization of $Y_N$ in (\ref{eqn:YNfactor}) and applying
the SLLN for each $\chi_k$ ($k=N,\ldots,N-n+1$),  we have
\begin{equation*}
  \frac{Y_N}{\sqrt{\frac{N!}{(N-n)!}}} \overset{a.s.}{\rightarrow} 1 
  \text{ as } N\rightarrow \infty, 
\end{equation*} and hence
\begin{equation*} {\alpha_{N,n}}=\frac{N^{n/2}}{Y_N}
  \overset{a.s.}{\rightarrow} 1 \text{ as } N\rightarrow \infty.
\end{equation*}
By the Cauchy-Binet forumula (\ref{eqn:CauchyBinet}) and the SLLN for
U-statistics (Theorem \ref{thm:Ustat_basics}(2)), we have
\begin{equation*}
  \frac{1}{{N \choose n}}Y_N^2 \overset{a.s.}{\rightarrow} \Delta_{n,2}^2 \text{ as } 
  N\rightarrow \infty.
\end{equation*}
Thus \begin{equation*} \beta_{N,n}= \frac{2^n{N\choose n}\Delta_{n,1}
  }{Y_N^2 (1+\frac{\EE Y_N}{Y_N})}\frac{N^{n/2}}{\EE Y_N
  }\overset{a.s.}{\rightarrow} \frac{2^{n}\Delta_{n,1}}{2\Delta_{n,2}^2} \text{
    as } N\rightarrow \infty.
\end{equation*} 
By Proposition \ref{prop:YN}(2) and Slutsky's theorem, we also have
$\delta_{N,n}\overset{a.s.}{\rightarrow} 0 \text{ as } N\rightarrow
\infty$.  To prove the last assertion, we note that for $1\ls p\ls
(N-n+1)/2$,
\begin{equation*}
  \EE \left(\frac{N^{\frac{n}{2}}}{Y_N}\right)^p \ls C_{n,p},
\end{equation*}
where $C_{n,p}$ is a constant that depends on $n$ and $p$ only (see,
e.g., \cite[Lemma 4.2]{PaoPiv_small}).
\end{proof}

\begin{proof}[Proof of Theorem \ref{thm:CLTcube}]
 To simplify the notation, for $I = \{i_1,\ldots,i_n\}\subset
 \{1,\ldots,N\}$, write $d_I=\abs{\mathop{\rm det}[g_{i_1}\cdots
     g_{i_n}]}$. Applying Proposition \ref{prop:ZNexpansion}, we can write
\begin{equation*}
  \frac{Z_N-\EE Z_N}{N^{\frac{n-1}{2}}} 
  =\frac{{N\choose n}}{N^{n-\frac{1}{2}}}(U_N -\EE U_N)
  + A_{N,n} - B_{N,n}  - \delta_{N,n},
\end{equation*} where
\begin{equation*}
  U_N = \frac{1}{{N\choose n}} \sum_{\abs{I}=n} (2^n d_I -\beta_n d_I^2),
\end{equation*}
\begin{equation*}
  A_{N,n}=(\alpha_{N,n}-1)\left(\frac{X_N-\EE X_N}{N^{n-\frac{1}{2}}}\right),
\end{equation*}and
\begin{equation*}
B_{N,n}=(\beta_{N,n}-\beta_n)\left(\frac{Y_N^2-\EE Y_N^2}{N^{n-\frac{1}{2}}}\right).
\end{equation*}
Set $I_0=\{1,\ldots,n\}$. Applying Theorem \ref{thm:Ustat_basics}(3)
with
\begin{equation}
  \label{eqn:zeta_ZN}
  \zeta  = \var{\EE[ (2^n d_{I_0}-\beta_n d^2_{I_0})| g_1]},
\end{equation}
yields
\begin{equation*}
  \sqrt{N}\left(\frac{U_N - \EE U_N}{n\sqrt{\zeta}}\right)
  \overset{d}{\rightarrow} \mathcal{N}(0,1) \text{ as } N\rightarrow
  \infty.
\end{equation*} 

By Proposition \ref{prop:ZNexpansion},
$\alpha_{N,n}\overset{a.s.}{\rightarrow} 1$,
$\beta_{N,n}\overset{a.s.}{\rightarrow} \beta_n$ and
$\delta_{N,n}\overset{a.s.}{\rightarrow} 0$; moreover, each of the
latter sequences is uniformly integrable. Thus by H\"{o}lder's
inequality and Proposition \ref{prop:XN}(1)
\begin{equation*}
  \EE\abs{A_{N,n}}\leq (\EE\abs{\alpha_{N,n}-1}^2)^{1/2}C_{n}\rightarrow 0
  \text{ as } N\rightarrow \infty.
\end{equation*}Similarly, using Proposition \ref{prop:YN}(1), 
\begin{equation*}
  \EE\abs{B_{N,n}}\leq (\EE\abs{\beta_{N,n}-\beta_n}^2)^{1/2}C_{n}\rightarrow 0
  \text{ as } N\rightarrow \infty.
\end{equation*}
By Slutsky's theorem and the fact that ${N\choose n}/N^n\rightarrow
1/n!$ as $N\rightarrow \infty$, we have
\begin{equation}
  \label{eqn:ZN_int_normal}
  \frac{n!(Z_N - \EE Z_N)}{N^{\frac{n-1}{2}}n\sqrt{\zeta}}
  \overset{d}{\rightarrow} \mathcal{N}(0,1) \text{ as } N\rightarrow
  \infty.
\end{equation}
To conclude the proof of the theorem, it is sufficient to show that
\begin{equation}
  \label{eqn:ZNvar_ratio}
\frac{n!\sqrt{\var{Z_N}}}{N^{\frac{n-1}{2}}n \sqrt{\zeta}}
\rightarrow 1 \text{ as } N\rightarrow \infty.
\end{equation}
Once again we appeal to uniform integrability: by Proposition
\ref{prop:ZNexpansion},
\begin{equation*}
\frac{\abs{Z_N-\EE Z_N}}{N^{\frac{n-1}{2}}} \ls 2^n\abs{\alpha_{N,n}}
\frac{\abs{X_N-\EE X_N}}{N^{n-\frac{1}{2}}} + 
\abs{\beta_{N,n}}\frac{\abs{Y_N^2 -\EE Y_N^2}}{N^{n-\frac{1}{2}}}+
\abs{\delta_{N,n}}.
\end{equation*} 
By H\"{o}lder's inequality and Propositions \ref{prop:XN}(1),
\ref{prop:YN}(1) and \ref{prop:ZNexpansion}, 
\begin{equation*}
\sup_{N\gr n+8p-1} \left\lvert\frac{Z_N-\EE
  Z_N}{N^{\frac{n-1}{2}}}\right\rvert^p \ls C_{n,p},
\end{equation*} 
which, combined with (\ref{eqn:ZN_int_normal}), implies
(\ref{eqn:ZNvar_ratio}).
\end{proof}

\section*{Acknowledgements}

It is our pleasure to thank R. Vitale for helpful comments on an
earlier version of this paper.

\bibliographystyle{amsplain} 
\bibliography{CLTcubeBIB}
\vspace{2cm}
\pagebreak
\noindent\begin{minipage}[l]{\linewidth}
  Grigoris Paouris: {\tt grigoris@math.tamu.edu}\\
  Department of Mathematics, Texas A\&M University \\
  College Station, TX, 77843-3368\\
\end{minipage}

\noindent\begin{minipage}[l]{\linewidth}
  Peter Pivovarov: {\tt pivovarovp@missouri.edu}\\
  Mathematics Department, University of Missouri\\
  Columbia, MO, 65211\\
\end{minipage}

\noindent\begin{minipage}[l]{\linewidth}
  Joel Zinn: {\tt jzinn@math.tamu.edu}\\
  Department of Mathematics, Texas A\&M University\\
  College Station, TX, 77843-3368\\
\end{minipage}

\end{document}